\documentclass[12pt]{amsart}

\usepackage{amssymb, amsmath, textcomp, amsthm, amsfonts, bm}
\usepackage{bbm,dsfont}
\usepackage{color}
\usepackage{hyperref}
\usepackage[left=1.03in, right=1.03in, top=1.05in, bottom=1.05in]{geometry}  

\usepackage[mathscr]{euscript}

\usepackage{lipsum}
\usepackage{comment}

\usepackage{mathabx}

\numberwithin{equation}{section}

\makeatletter

\newcommand*\barredprod{%
  \DOTSB\mathop{%
      \@rodriguez@mathpalette \@rodriguez@overprint@bar \prod
    }\slimits@
}

\newcommand*\@rodriguez@mathpalette[2]{%
  \mathchoice
    {#1\displaystyle      \textfont         {#2}}%
    {#1\textstyle         \textfont         {#2}}%
    {#1\scriptstyle       \scriptfont       {#2}}%
    {#1\scriptscriptstyle \scriptscriptfont {#2}}%
}

\newcommand*\@rodriguez@overprint@bar[3]{%
  \sbox\z@{$#1#3$}%
  \dimen@   = \ht\z@   \advance \dimen@   \p@
  \dimen@ii = \dp\z@   \advance \dimen@ii \p@
  \dimen4 = 1.25\fontdimen 8 #2\thr@@ \relax
  \ooalign{
    \@rodriguez@bar \dimen@ \z@ \cr   
    $\m@th #1#3$\cr
    \@rodriguez@bar \z@ \dimen@ii \cr 
  }%
}
\newcommand*\@rodriguez@bar[2]{%
  \hidewidth \vrule \@width \dimen4 \@height #1\@depth #2\hidewidth
}

\makeatother

\makeatletter
\newcommand\avsuminner[2]{%
  {\sbox0{$\m@th#1\sum$}%
   \vphantom{\usebox0}%
   \ooalign{%
     \hidewidth
     \smash{\vrule height\dimexpr\ht0+1pt\relax depth\dimexpr\dp0+1pt\relax}%
     \hidewidth\cr
     $\m@th#1\sum$\cr
   }%
  }%
}
\makeatother

\newtheorem{theorem}{Theorem}[section]
\newtheorem{lemma}[theorem]{Lemma}

\newtheorem{proposition}[theorem]{Proposition}

\newtheorem{remark}[theorem]{Remark}
\newtheorem*{remark*}{Remark}

\newtheorem{definition}[theorem]{Definition}
\newtheorem*{definition*}{Definition}
\newtheorem{corollary}[theorem]{Corollary}

\newcommand{\al}{\alpha}
\newcommand{\be}{\beta}

\newcommand{\de}{\delta}

\newcommand{\De}{\Delta}
\newcommand{\e}{\varepsilon}

\newcommand{\ka}{\kappa}

\newcommand{\si}{\sigma}
\newcommand{\Si}{\Sigma}

\newcommand{\cs}{\mathcal S}

\newcommand{\cp}{\mathcal P}

\newcommand{\cb}{\mathcal B}

\newcommand{\cd}{\mathcal D}

\newcommand{\cn}{\mathcal N}

\newcommand{\sa}{\mathscr A}


\newcommand{\wt}{\widetilde}

\newcommand{\ZR}{\mathbb{R}}

\newcommand{\ZN}{\mathbb{N}}

\newcommand{\ZS}{\mathbb{S}}

\newcommand{\Id}{{\rm{\bf{1}}}}

\newcommand{\cB}{{\mathcal B}}
\newcommand{\cT}{{\mathcal T}}
\newcommand{\cQ}{{\mathcal Q}}
\newcommand{\cC}{{\mathcal C}}
\newcommand{\cg}{\mathcal G}
\newcommand{\cl}{\mathcal L}

\newcommand{\dist}{{\rm dist}}

\newcommand{\supp}{\rm{supp}}

\newcommand{\hdim}{\dim_{\rm H}}

\usepackage{graphicx}

\begin{document}

\title[Study guide]{A study guide for ``On the Hausdorff dimension of Furstenberg sets and orthogonal projections in the plane"
\\[1ex]  After T. Orponen and P. Shmerkin}

\author{Jacob B. Fiedler}
\address{Department of Mathematics, University of Wisconsin, Madison, WI, USA}
	\email{jbfiedler2@wisc.edu}

\author{Guo-Dong Hong}
\address{Department of Mathematics, California Institute of Technology, Pasadena, CA,
USA}
	\email{ghong@caltech.edu}

\author{Donggeun Ryou}
\address{Department of Mathematics, University of Rochester, Rochester, NY, USA}
	\email{dryou@ur.rochester.edu}

\author{Shukun Wu}
\address{Department of Mathematics, Indiana University Bloomington, Bloomington, IN, USA}
	\email{shukwu@iu.edu}

\begin{abstract}
    This article is a study guide for ``On the Hausdorff dimension of Furstenberg sets and orthogonal projections in the plane" by Orponen and Shmerkin \cite{OS21}. We begin by introducing Furstenberg set problem and exceptional set of projections and provide a summary of the proof with the core ideas.
\end{abstract}

\maketitle

\tableofcontents

\section{Introduction}
This paper aims to give an idea of the proof of the main theorem in \cite{OS21}, which studies Furstenberg sets and the exceptional set of orthogonal projections.
First of all, let us start with the \textit{$(s,t)$-Furstenberg sets}:

\begin{definition}
    A set $F \in \mathbb{R}^2$ is called an \textit{$(s,t)$-Furstenberg sets} if there exist $\cl$, a family of lines $\ell(a,b) := \{(x,y) : y=ax+b\}$ with $\hdim\cl:=\hdim\{(a,b): \ell(a,b) \in \cl\} \geq t$, such that $\hdim(F \cap \ell) \geq s$ for all $\ell \in \cl$.
\end{definition}

The main theorems of the paper are as follows.
\begin{theorem}\label{MainT_Furst}
    For every $s \in (0,1)$ and $t \in (s,2]$, there exists $\epsilon=\epsilon(s,t)>0$ such that given any \textit{$(s,t)$-Furstenberg sets} $F\in \mathbb{R}^2$, we have $\hdim F \geq 2s+\epsilon$. 
\end{theorem}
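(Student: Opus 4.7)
The plan is to argue by contradiction starting from a $\delta$-discretized version of the problem, and to extract the gain $\epsilon$ from Shmerkin's non-trivial exceptional-projection estimate combined with a multi-scale iteration.

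First I would discretize. Suppose toward contradiction that there exist $(s,t)$-Furstenberg sets $F_n$ with $\hdim F_n \to 2s$. After passing to a Frostman measure on a line family $\cl$, I would extract, at a scale $\delta>0$, a $(\delta,t)$-set of $\delta$-tubes $\cT$ and a $(\delta,s)$-covering of $F$ by roughly $\delta^{-2s}$ balls such that each tube in $\cT$ meets $\gtrsim \delta^{-s}$ of them. A standard pigeonholing (uniformization at all dyadic subscales, as in the regular-family arguments of Katz--Tao and of Shmerkin) lets me assume the configuration is well-balanced, with a constant branching factor per scale both along each line and among the line-parameters.

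Next I would pass to the dual plane via $(x,y)\mapsto\{(a,b):\, y=ax+b\}$. The family $\cl$ becomes a $(\delta,t)$-set of points $\cl^{*}$, and each point of $F$ becomes a dual line; the incidence hypothesis becomes the statement that each of the $\delta^{-t}$ points of $\cl^{*}$ is met by $\gtrsim \delta^{-s}$ dual lines. I would then push a Frostman measure supported on $\cl^{*}$ forward by radial projection from a point of $F$ (equivalently, by orthogonal projection in the dual picture) to reduce to a statement about projections of a $t$-dimensional measure: the assumption $\hdim F \approx 2s$ forces the image measure along a large set of directions to concentrate like a set of dimension $s$, producing an exceptional projection set that is too large.

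The third step is to invoke Shmerkin's $L^q$-smoothing / exceptional-set theorem for projections of Frostman measures, which says that the set of directions along which a $t$-Frostman measure projects with dimension $\leq s$ has \emph{strictly} smaller dimension than the Kaufman--Falconer bound allows. Comparing this upper bound on the exceptional set to the $t$-dimensional lower bound produced above gives a contradiction; tracking the constants yields a quantitative $\epsilon=\epsilon(s,t)>0$. To upgrade this single-scale gain to a Hausdorff-dimension statement I would iterate the estimate along a geometric sequence of dyadic scales, telescoping the $\epsilon$-increments.

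The main obstacle I expect is interfacing the one-scale projection improvement with the multi-scale argument: Shmerkin's theorem needs its input to satisfy Frostman bounds at every intermediate scale, but a generic Furstenberg configuration only enjoys such regularity after passing to a subset, and each subset selection can erode the $(\delta,t)$-hypothesis on $\cl$. Orchestrating a uniform subconfiguration that simultaneously retains a $(\delta,t)$-line family, keeps $\delta^{-s}$ points per line, and satisfies Frostman regularity at every dyadic subscale is where most of the technical weight should sit. A secondary concern is the degeneration as $t\downarrow s$: the gap $t-s$ must enter quantitatively into $\epsilon$, so the projection estimate has to be applied with constants that depend continuously on this gap.
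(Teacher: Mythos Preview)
Your high-level skeleton (discretize, pass to the dual, reduce to a projection statement, iterate over scales) matches the paper's, but the proposal has a genuine gap at step~3. The ``Shmerkin exceptional-set theorem'' you invoke---that for a $t$-Frostman measure the set $\{e:\hdim\pi_e(K)<s\}$ has dimension strictly below the Kaufman bound $s$ for \emph{every} $s<t$---is precisely Theorem~\ref{MainT_proj} of this paper, and it is proved from the \emph{same} discretized incidence statement (Theorem~\ref{MainT_tube}) as Theorem~\ref{MainT_Furst}. It is not a pre-existing black box; using it here is circular. The prior tool that is available is Bourgain's projection theorem (Theorem~\ref{bourgain-projection-2}), which is substantially weaker: it requires as input a set with Frostman non-concentration at \emph{all} scales and only yields that projections beat half the dimension of that set in a positive-dimensional set of directions.

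What the paper actually does to close this gap is the content of Section~\ref{est_w_reg}, and it is not the direct duality reduction you sketch. Under the contradiction hypothesis $|\cT|\lessapprox\delta^{-2s}$ together with a \emph{regularity} assumption on $\cp$ at the intermediate scale $\delta^{1/2}$, one works inside a single typical $\delta^{1/2}$-tube $\bar T$, shows that the $\delta^{1/2}$-cubes of $\cp$ inside $\bar T$ form a $(\delta^{1/2},t-s)$-set along $\bar T$ while their projections transverse to $\bar T$ are $(\delta^{1/2},s)$-sets, and thus manufactures a genuine product configuration $P''\subset X\times Y$. Only after a further duality does this yield a $2s$-dimensional set with $\lessapprox\Delta^{-s}$-small projections in a $(t-s)$-dimensional set of directions---exactly the input Bourgain's theorem needs. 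Your step ``$\hdim F\approx 2s$ forces the image measure along a large set of directions to concentrate like an $s$-set'' does not follow from duality alone: the dual configuration gives, for each $q\in\cl^*$, an $s$-dimensional set of dual lines \emph{through $q$}, which is not the same as a small projection of $\cl^*$ in any fixed direction. The product-structure extraction is the missing idea, and it in turn only works at regular scales; the multi-scale decomposition of Propositions~\ref{prop_scales}--\ref{propT_multiscale} exists precisely to locate those scales and to show the remaining scales contribute negligibly.
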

Next, we switch to the orthogonal projection: for $e \in \mathbb{S}^1$ we denote $\pi_e:\mathbb{R}^2 \rightarrow \mathbb{R}$ by the orthogonal projection to the line passing through the origin spanned by $e$, i.e., $\pi_e(x) = e \cdot x$.
\begin{theorem}\label{MainT_proj}
    Consider $s\in (0,1)$ and $t \in (s,2]$, then there exists $\epsilon= \epsilon(s,t)>0$, such that for any $K \subset \mathbb{R}^2$ with $\hdim(K)=t$, 
    \[
        \hdim\{ e \in \mathbb{S}^1 \,|\, \hdim \Pi_e(K) < s\} \leq s-\epsilon.
    \]
\end{theorem}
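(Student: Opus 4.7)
My plan is to deduce Theorem~\ref{MainT_proj} from Theorem~\ref{MainT_Furst} via classical point--line duality. Fix a compact $K \subset \R^2$ with $\hdim K = t$ and let $E = \{e \in \mathbb{S}^1 : \hdim \Pi_e(K) < s\}$; write $u := \hdim E$. I aim to show $u \leq s - \epsilon_0(s,t)$, where $\epsilon_0$ is the quantitative gain produced by Theorem~\ref{MainT_Furst}.

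First, consider the family of ``bad'' lines $\cl := \bigcup_{e \in E}\{\Pi_e^{-1}(r) : r \in \Pi_e(K)\}$, consisting of all lines perpendicular to some $e \in E$ which meet $K$. A line perpendicular to $e = (\cos\theta,\sin\theta)$ with offset $r$ has slope $-\cot\theta$ and intercept $r/\sin\theta$; hence under the duality $(a,b) \leftrightarrow \ell(a,b)$ the family $\cl$ becomes a set of points $\cl^* \subset \R^2$ fibered over the slope set $A := \{-\cot\theta : (\cos\theta,\sin\theta) \in E\}$ (of dimension $u$), whose fiber above $a = -\cot\theta$ is (up to the smooth rescaling $r \mapsto r/\sin\theta$) the projection $\Pi_e(K)$, which by assumption has $\hdim < s$. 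By a product-type estimate for Hausdorff dimension, $\hdim \cl^* \leq u + s$.

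Next, dualize $K$: each $k = (x_0,y_0) \in K$ becomes the line $k^* := \{(a, -x_0 a + y_0) : a \in \R\}$ in the dual plane, and the family $K^* := \{k^* : k \in K\}$ has line--Hausdorff dimension equal to $\hdim K = t$. A direct computation shows that for every $k \in K$ and every $e = (\cos\theta,\sin\theta) \in E$, the dual point $(-\cot\theta,\; \cot\theta \cdot x_0 + y_0)$ of the line perpendicular to $e$ through $k$ lies on $k^*$ and belongs to $\cl^*$, and the map $e \mapsto -\cot\theta$ is a smooth injection. Hence $\hdim(k^* \cap \cl^*) \geq u$ for every $k \in K$, so $\cl^*$ is a $(u, t)$-Furstenberg set in the sense of the paper.

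Since we may assume $u \leq s < t$ (the complementary case $u > s$ is ruled out by the classical Kaufman--Mattila bound), Theorem~\ref{MainT_Furst} applied to $\cl^*$ yields $\hdim \cl^* \geq 2u + \epsilon_0(u, t)$. Combining with the upper bound gives $2u + \epsilon_0 \leq u + s$, i.e., $u \leq s - \epsilon_0$. The main obstacle is the regularity transfer through the duality: to extract an $\epsilon_0$ depending only on $(s,t)$, one must pass through Frostman measures on $K$ and $E$ and verify that the dual configuration inherits the single-scale ``thick'' structure required by the proof in \cite{OS21}. A secondary technicality is promoting the pointwise bound $\hdim \Pi_e(K) < s$ to a \emph{uniform} bound $\hdim \Pi_e(K) \leq s - \eta$ on a positive-dimension subset of $E$, handled by standard pigeonholing across dyadic scales.
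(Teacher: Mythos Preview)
Your approach differs from the paper's: there, Theorem~\ref{MainT_proj} is not deduced from Theorem~\ref{MainT_Furst}. Both are instead derived in parallel from the common discretized incidence estimate, Theorem~\ref{MainT_tube}, via pigeonholing (see Section~3 of \cite{OS21} and the heuristic for projections in Section~\ref{Section 2} above).

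The duality you set up is natural, but there is a genuine gap at the step ``by a product-type estimate for Hausdorff dimension, $\hdim \cl^* \leq u+s$.'' No such estimate exists: knowing that the base $A=\pi_1(\cl^*)$ has $\hdim A=u$ and that every vertical fiber has Hausdorff dimension $<s$ does \emph{not} force $\hdim \cl^*\leq u+s$. Hausdorff dimension admits no Fubini-type upper bound of the form $\hdim(\text{base})+\sup(\hdim\text{ of fibers})$; already the graph of a rough function has base $[0,1]$ of dimension $1$, vertical slices all singletons, yet graph dimension strictly larger than $1$. Neither of the technicalities you flag addresses this: even a uniform bound $\hdim\Pi_e(K)\leq s-\eta$ gives no control on $\delta$-covering numbers of the fibers, and the only Lipschitz-image bound available (from the map $A\times K\to\cl^*$) yields $\hdim\cl^*\leq\hdim(A\times K)$, which is at least $u+t>u+s$.

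To close the gap one must drop to a fixed scale $\delta$ and count: a $(\delta,u)$-set of directions together with $|\Pi_e(K)|_\delta\lessapprox\delta^{-s}$ for each such $e$ does give $|\cl^*|_\delta\lessapprox\delta^{-u-s}$, which then contradicts the discretized Furstenberg bound. But at that point you are applying Theorem~\ref{MainT_tube} rather than Theorem~\ref{MainT_Furst} as a black box, and the argument has collapsed into a dualized version of the paper's own route.
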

Both theorem \ref{MainT_Furst} and theorem \ref{MainT_proj} follow from their discretized version: Theorem \ref{MainT_tube}, which we will introduce later. Before that, we provide some background on Furstenberg sets and exceptional sets of projections.

\subsection{Furstenberg set problem}
The Furstenberg set problem is a fractal analog of the Kakeya problem in $\mathbb{R}^2$. A \textit{Kakeya set} in $\mathbb{R}^n$ is a compact set $K \subset \mathbb{R}^n$ which contains a unit line segment in every direction, and the \textit{Kakeya set problem} asks the smallest possible Hausdorff dimension of a Kakeya set. The conjecture is that the Hausdorff dimension is $\geq n$. It is true when $n=2$, but it is still open in higher dimensions, i.e. $n \geq 3$. Similarly, the \textit{$(s,t)$-Furstenberg set problem} asks the smallest possible Hausdorff dimension of an $(s,t)$-Furstenberg set.

As a special case, if we consider a set $F\subset \mathbb{R}^2$ such that for all directions $e \in S^1$, there is a line $\ell_e$ in the direction $e$ such that $\hdim(F \cap \ell_e) \geq s$, then we call it \textit{$s$-Furstenberg set} and the corresponding problem is called the \textit{$s$-Furstenberg set} problem. Wolff showed a lower bound for the $s$-Furstenberg set problem in \cite{Wolff99}, by using the Szemer\'edi-Trotter theorem, which can be generalized to the $(s,t)$-Furstenberg set problem as follows.
\begin{theorem}
    Consider $s\in (0,1)$ and $t \in (s,1]$, then for any $(s,t)$-Furstenberg set $F$, we have $\hdim F\geq \max\{\frac{t}{2}+s ,2s\}$.
\end{theorem}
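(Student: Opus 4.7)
The plan is to discretize the problem at scale $\delta > 0$ and apply the Szemer\'edi--Trotter incidence theorem, following Wolff's original approach but keeping track of both exponents $s$ and $t$.

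First, I would carry out the standard Frostman-type reduction. Fix an arbitrarily small $\eta > 0$ and a small $\delta > 0$. Since $\hdim \cl \geq t$, I can extract a $\delta$-separated collection $\mathcal{T}$ of $\delta$-tubes (thickenings of lines in $\cl$) with $|\mathcal{T}| \geq \delta^{-t+\eta}$; since $\hdim(F \cap \ell) \geq s$ for each $\ell$, for each $T \in \mathcal{T}$ I can extract a $\delta$-separated set $P_T \subset F \cap T$ with $|P_T| \geq \delta^{-s+\eta}$. Setting $P = \bigcup_{T \in \mathcal{T}} P_T$, so that $|P|$ is a lower bound for the $\delta$-covering number $|F|_\delta$, the Furstenberg structure gives the incidence lower bound
\[
I(P, \mathcal{T}) := \#\{(p, T) \in P \times \mathcal{T} : p \in T\} \geq \delta^{-(s+t) + 2\eta}.
\]

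Next, I would invoke the $\delta$-discretized Szemer\'edi--Trotter theorem, which for $\delta$-separated points and $\delta$-separated tubes reads
\[
I(P, \mathcal{T}) \lesssim (|P| \cdot |\mathcal{T}|)^{2/3} + |P| + |\mathcal{T}|
\]
up to polylogarithmic factors. Comparing the lower and upper bounds, at least one of the three terms on the right must account for the lower bound. The third term $|\mathcal{T}| = \delta^{-t}$ is incompatible with the lower bound since $s > 0$. If the linear term $|P|$ dominates, then $|P| \gtrsim \delta^{-(s+t) + O(\eta)}$, while if the $(|P||\mathcal{T}|)^{2/3}$ term dominates, solving gives $|P| \gtrsim \delta^{-(3s+t)/2 + O(\eta)}$. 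Letting $\delta \to 0$ and then $\eta \to 0$, in either case I obtain $\hdim F \geq \min\{s+t, (3s+t)/2\}$. Under the hypothesis $t \geq s$, both quantities dominate $\max\{2s, s + t/2\}$: the inequalities $(3s+t)/2 \geq 2s$ and $s+t \geq 2s$ are equivalent to $t \geq s$, while $(3s+t)/2 \geq s + t/2$ and $s+t \geq s + t/2$ are trivial.

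The main technical obstacle I anticipate is applying the discretized Szemer\'edi--Trotter theorem correctly to $\delta$-tubes and $\delta$-balls rather than to true lines and points: two nearly parallel tubes intersect in a $\delta \times (\delta/\theta)$ parallelogram that may contain many $\delta$-balls, which could a priori inflate the incidence count beyond the classical bound. The standard remedy is to enforce $\delta$-separation on the tube parameters and to pigeonhole directions/offsets into dyadic scales, absorbing the loss into polylogarithmic factors. Since these factors vanish upon taking $\eta \to 0$ in the Hausdorff-dimensional bound, they do not affect the conclusion.
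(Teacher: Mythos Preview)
Your argument has a genuine gap: the ``$\delta$-discretized Szemer\'edi--Trotter theorem'' you invoke,
\[
I(P,\mathcal{T}) \lesssim (|P|\cdot|\mathcal{T}|)^{2/3} + |P| + |\mathcal{T}|
\]
up to polylogarithmic factors, is \emph{false} for $\delta$-separated balls and $\delta$-separated tubes without additional non-concentration hypotheses. A simple counterexample: take a $\delta^{1/2}\times 1$ rectangle, fill it with $\sim\delta^{-3/2}$ many $\delta$-separated points and $\sim\delta^{-1/2}$ parallel $\delta$-tubes. Each tube contains $\sim\delta^{-1}$ points, so $I\sim\delta^{-3/2}$, whereas your bound gives only $\sim\delta^{-4/3}$. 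The failure is by a power of $\delta$, not a logarithm, so it cannot be absorbed into the $\eta$-loss. Your final paragraph correctly senses a difficulty but misdiagnoses it as a polylogarithmic issue; in fact, making the sharp Szemer\'edi--Trotter bound rigorous for tubes is essentially equivalent to the full Furstenberg conjecture itself. Notice that if your argument worked, it would yield $\hdim F\geq\min\{s+t,(3s+t)/2\}$, which was open until Ren--Wang (2023).

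The paper's approach is different and avoids this trap: it uses only the \emph{elementary} Szemer\'edi--Trotter bounds $I\lesssim |\mathcal{L}|^{1/2}|\mathcal{P}|+|\mathcal{L}|$ and $I\lesssim |\mathcal{L}||\mathcal{P}|^{1/2}+|\mathcal{P}|$. These \emph{can} be made rigorous for $\delta$-tubes and $\delta$-balls, because their proofs rely only on the two-point estimate ``at most one line through two distinct points,'' whose discretized analogue holds once one imposes the $(\delta,s)$- and $(\delta,t)$-set conditions coming from Frostman measures (see the paper's Section~3, especially Proposition~\ref{discretized-ST}). The two elementary bounds then yield exactly $s+t/2$ and $2s$ respectively --- no more, no less --- which is why the theorem is stated with those exponents rather than the sharp ones.
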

This is essentially the ``elementary'' bound for the problem. It was conjectured that every $(s,t)$-Furstenberg set $F \subset \mathbb{R}^2$ has Hausdorff dimension
\begin{equation*}
    \hdim F \geq \min \{ s+t, \frac{3s+t}{2}, s+1\}.
\end{equation*}
In recent work, Ren and Wang \cite{RW23} fully resolved this conjecture, which we will detail towards the end of this section. 

An example for $\hdim F = s+t $ or $s+1$ can be constructed by using ``Cantor target," and an example for $\hdim F = \frac{3s+t}{2} $ can be found in \cite{Wolff99}. Only the case when $t=1$ is described in \cite{Wolff99}, but it can be extended to $t \leq 1$. Note that $\frac{3s+t}{2} \geq s+1$ if $t \geq 1$.
\begin{figure}[ht]
\centering
\includegraphics[width=0.3\textwidth]{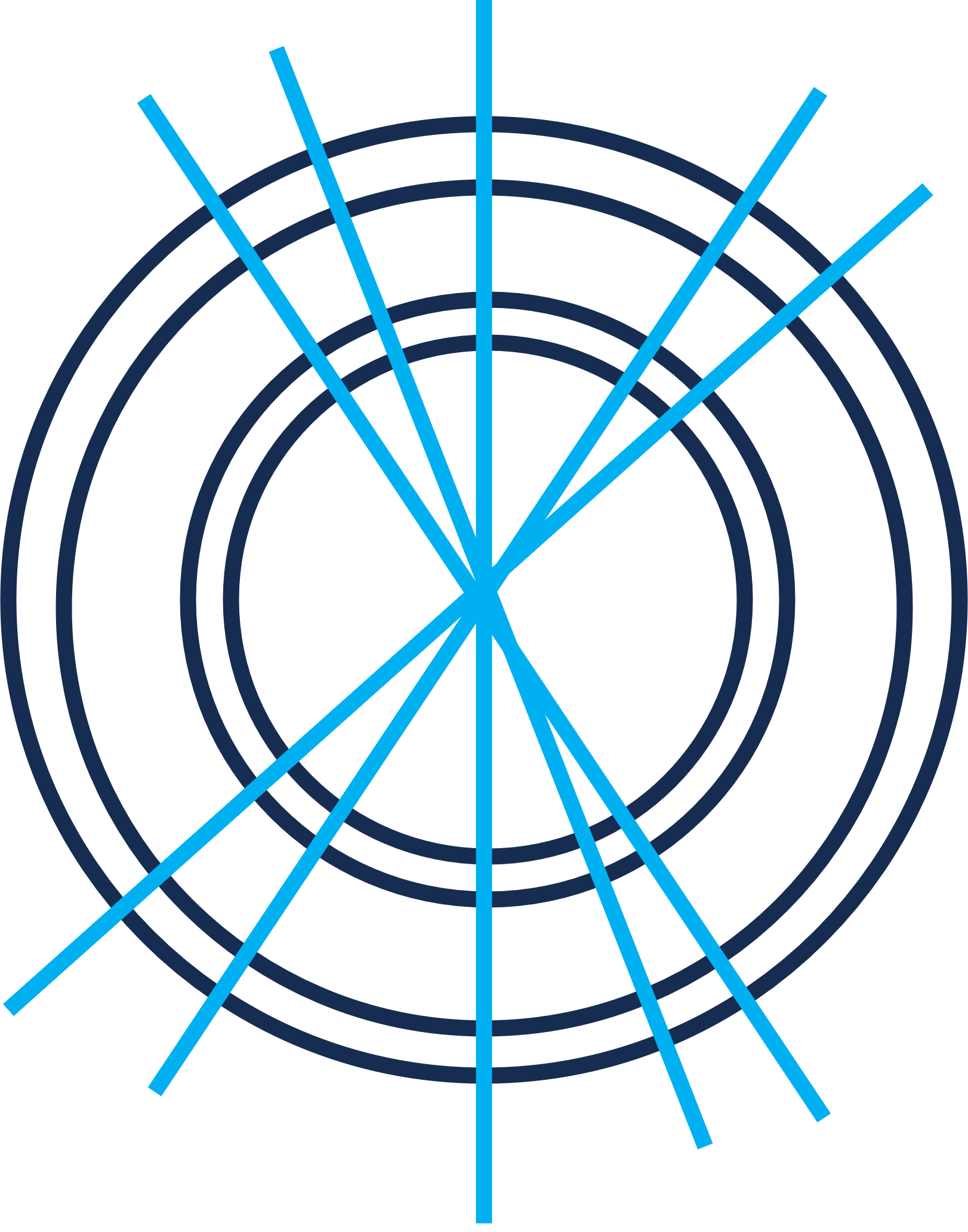}
\caption{Cantor target: Let $\cC$ be circles such that the set of radii is a Cantor set of Hausdorff dimension $s$ and let $\cl$ be a set of lines pass through origin such that Hausdorff dimension of the set of angles is $t\leq 1$. Let $K=\cup_{c \in \cC, l \in \cl} C \cap l$, then $K$ is a $(s,t)$-Furstenberg set of dimension $s+t$.}
\end{figure}

\subsection{Exceptional set of projections}
For simplicity, let us focus on the case when $K \subset \mathbb{R}^2$ with $\hdim K \in (0,1]$. Marstrand's projection theorem \cite{Mars54} says that if $\hdim K  =s \leq 1$, then $\hdim \pi_e(K) =s$ for $\mathcal{H}^1$ almost all $e \in \mathbb{S}^1$. Then, we can ask how small the exceptional directions are such that $\hdim \pi_e (K) <s$. 

Let $\hdim K =t$. Some of the known results on this question are as follows: 
    \begin{itemize}
        \item \cite{Kauf68} For any $s \in (0,t]$,
        \begin{equation}\label{Kauf_est}
        \hdim \{ e \in \mathbb{S}^1 \,|\, \hdim \pi_e(K) < s\} \leq s.    
        \end{equation}
        \item \cite{Ober12}
        \begin{equation}\label{Ober_est}
        \hdim\{ e \in \mathbb{S}^1 \,|\, \hdim \pi_e(K) < t/2 \} =0.    
        \end{equation}
        \item \cite{Bourgain-projection} Given $\epsilon>0$, there exists $\delta>0$, such that
        \[
        \hdim\{ e \in \mathbb{S}^1 \,|\, \hdim \pi_e(K) < t/2 + \delta\} < \epsilon.
        \]
        In particular, 
         \[
        \hdim \{ e \in \mathbb{S}^1 \,|\, \hdim \pi_e(K) \leq t/2 \} =0.
        \]
    \end{itemize}
Let $\hdim K =t$, then the conjecture was that for any $s \in [0, t]$,
\begin{equation}\label{Proj_conj}
    \hdim\{ e \in \mathbb{S}^1 \,|\, \hdim  \pi_e(K) < s\} \leq \max \{ 2s-t, 0\}.
\end{equation}
Ren and Wang's work \cite{RW23} also implies that this conjecture is true . 

\subsection{History of progress on the Furstenberg set problem}    

Here, we will cover many of the results on the Furstenberg set problem, leading up to its recent resolution. In particular, we explain the key role played by Orponen and Shmerkin's work \cite{OS21}. As mentioned, Wolff was the first to record progress on the Furstenberg set problem in \cite{Wolff99}. He proved the following bound on the size of $s$-Furstenberg sets $F$, which he indicated likely originated with Furstenberg and Katznelson
\begin{equation*}
    \hdim (F)\geq \max \{2s, \frac{1}{2} + s\}.
\end{equation*}
\noindent In 2003, the work of Bourgain \cite{Bou03} in conjunction with the prior work of Katz and Tao \cite{KT01} led to an $\epsilon$-improvement in the above bound around $s=\frac{1}{2}$. However, for $s$ appreciably larger than $\frac{1}{2}$, Wolff's bounds remained the strongest for some time. 

Prior to the work of Orponen and Shmerkin that is the subject of this study guide, other progress on the Furstenberg set problem was made. In 2010, Molter and Rela \cite{MR10} generalized $s$-Furstenberg sets to $(s, t)$-Furstenberg sets. They established the bound
\begin{equation*}
\hdim(F)\geq \max\{s + \frac{t}{2}, 2s + t-1\}.
\end{equation*}
\noindent In 2017, N. Lutz and Stull \cite{LS20} improved this bound for certain values of $s$ and $t$ . They obtained 
\begin{equation*}
    \hdim(F)\geq s + \min\{s, t\}.
\end{equation*}
\noindent Notably, this was the first application of the point-to-set principle of J. Lutz and N. Lutz \cite{LL18} to establish a new result in classical fractal geometry. Other relevant work includes Hera, Shmerkin, and Yavicoli's $\epsilon$-improvement for $(s, 2s)$-Furstenberg sets \cite{HSY22}; Benedetto and Zahl's quantified improvement over this bound \cite{BZ21}; and separate $\epsilon$-improvements on the \emph{packing} dimension bound for $s$-Furstenberg sets by Orponen \cite{Orp20} and Shmerkin \cite{Shm22}.

Considering the above work, Theorem \ref{MainT_Furst} is notable because it establishes an $\epsilon$-improvement to Hausdorff dimension bound for $s$-Furstenberg sets for all $s>\frac{1}{2}$, not just $s$ near $\frac{1}{2}$. Additionally, it yields an improvement to the $(s, t)$-Furstenberg set problem for $t>s$, and not just $t=2s$, marking the first general improvement to the elementary bounds. Even beyond these advances, however, \cite{OS21} was a breakthrough. This paper played a key role in later work that eventually led to a resolution of the Furstenberg set problem and sharp bounds for the dimension of exceptional sets of projections, which we now outline.

This $\epsilon$-improvement was a crucial ingredient of Orponen, Shmerkin, and Wang's recent sharp bounds on the size of radial projections in  $\mathbb{R}^2$ \cite{OSW24}. They establish these bounds by via a bootstrapping scheme that depends on the small but sufficiently uniform improvement to the Furstenberg set problem of \cite{OS21}. The study guide \cite{BBMO24} is a helpful tool for reading Orponen, Shmerkin, and Wang's paper. 

Next, in work from 2023, Orponen and Shmerkin were able to use this radial projection theorem to establish a major explicit improvement to the $(s, t)$-Furstenberg set problem, as well improved bounds for exceptional sets of projections \cite{OS23}. In particular, for an $s$-Furstenberg set $F$, they showed
\begin{equation}\label{OS2023 general bound}
    \hdim(F)\geq \max\{2s +\frac{(1-s)^2}{2-s}, 1+s\}.
\end{equation}
This bound follows from a resolution of the Furstenberg set conjecture under the assumption that the set of lines is ``regular'' (meaning the set looks roughly the same at all scales). The idea is more or less that the regularity gives a quasi-product structure to these sets, enabling the application of a discretized sum-product theorem; this theorem is the first of \cite{OS23} and its proof uses the radial projection theorem of \cite{OSW24}. 

However, the resolution for regular Furstenberg sets is not only interesting because it led to the bound \eqref{OS2023 general bound}. It also resolves one of the ``enemy'' scenarios for the Furstenberg set problem, related to the aforementioned sum-product phenomenon. The other enemy scenario comes from ``well-spaced'' sets, the opposite of regular sets in the sense that well-spaced sets look as \emph{different} as possible across scales, while regular sets look very similar across scales. Progress was made on this other enemy through a sharp incidence estimate for well-spaced tubes by Guth, Solomon, and Wang \cite{GSW19}. Ren and Wang generalized this sharp incidence estimate to a wider class of semi-well spaced sets \cite{RW23}. Then, they combined the extremal cases of regular sets and semi-well spaced sets, establishing for $(s, t)$-Furstenberg sets $F$ that
\begin{equation*}
    \hdim F \geq \min \{ s+t, \frac{3s+t}{2}, s+1\}.
\end{equation*}
This is the sharp bound. The same work also gives the conjectured exceptional set bounds, namely that if $\hdim K =t$, then for any $s \in [0, t]$,
\begin{equation*}
    \hdim\{ e \in \mathbb{S}^1 \,|\, \hdim  \pi_e(K) < s\} \leq \max \{ 2s-t, 0\}.
\end{equation*}

\subsection{Roadmap of \texorpdfstring{\cite{OS21}}{OS21}}
The paper \cite{OS21} can be divided into three parts:
\begin{enumerate}
    \item Part 1: Section 2,3\\
    In the first part, the goal is to introduce the ``discretized" Szemeredi-Trotter estimate and see how to deduce the bound on the Furstenberg set problem and the exceptional set of orthogonal projection from these discretized estimates.
        \begin{itemize}
        \item Section 2: Szemeredi-Trotter theorem and its discretized version. See \eqref{Section 3}.
        \item Section 3: Proof of Furstenberg set problem and exceptional set of orthogonal projection by using discretized Szemeredi-Trotter estimate. See \eqref{Section 2}.
        \end{itemize}
    \item Part 2: Section 4,5,6 and Appendix\\
    In the second part, the goal is to prove an improved discretized Szemeredi-Trotter estimate under regular conditions with the help of an induction on scales scheme.
        \begin{itemize}
            \item Section 4,5: An induction on scale scheme.
            \eqref{Section 4}.
            \item Appendix: Discretized Szemeredi-Trotter estimate (a dichotomy under regular conditions).
            \item Section 6: Improved discretized Szemeredi-Trotter estimate under regular conditions.
            \eqref{est_w_reg}.
        \end{itemize}
    \item Part 3: Section 7,8,9\\
    In the final part, the goal is to introduce a multiscale analysis to get an improved discretized Szemeredi-Trotter estimate by ``interpolating" the original estimate and the improved estimate under regular conditions.
        \begin{itemize}
            \item Section 7:
            Multiscale analysis 1: Combining incidence estimates from different scales. See \eqref{Section 5}.
            \item Section 8: 
            Multiscale analysis 2: Choosing good decompositions of different scales. See \eqref{Section 5}.
            \item Section 9: 
            Finishing the proof of the improved discretized Szemeredi-Trotter estimate. See \eqref{est_w_reg}.
        \end{itemize}
\end{enumerate}

\section{Heuristic proof} \label{Section 2}
In this section, we will see how to use the Szemer\'edi-Trotter theorem to give a heuristic estimate of both the Furstenberg set problem and the exceptional set of the orthogonal projection, which motivated further study of the discretized Szemer\'edi-Trotter in the next section.

We denote by $A \lesssim B$ if $A \leq CB$ for some constant $C>0$, and we abbreviate $A \lesssim B \lesssim A$ to $A \sim B$. If the implicit constant depends on parameters like $\epsilon$, we denote $A \lesssim_\epsilon B$. We will sometimes use $A \lessapprox B$ to denote $A \lesssim_\epsilon \delta^{-\epsilon} B$ and use $A \approx B$ to write $A \lessapprox B$ and $B \lessapprox A$.

\subsection{Szemer\'edi-Trotter theorem: sharp and elementary versions}
\begin{theorem} Let $\mathcal{L}$ be a finite set of lines in $\mathbb{R}^2$ and $\mathcal{P}$ be a finite set of points in $\mathbb{R}^2$. Let $\mathcal{I}(\mathcal{P}, \mathcal{L})$ is the set of incidences between lines in $\mathcal{L}$ and points in $\mathcal{P}$, i.e. $\mathcal{I}(\mathcal{P},\mathcal{L}):=\{(p,L)\in \mathcal{P} \times \mathcal{L}: p \in L \}$. Then, we have the following estimates.
    \begin{itemize}
        \item (sharp version)
        \[
        |\mathcal{I}(\mathcal{P},\mathcal{L})|
        \lesssim
        |\mathcal{L}|^{2/3}|\mathcal{P}|^{2/3}+|\mathcal{L}|+|\mathcal{P}|.
        \]
        \item (elementary version 1)
        \[
        |\mathcal{I}(\mathcal{P},\mathcal{L})|
        \lesssim
        |\mathcal{L}|^{1/2}|\mathcal{P}|+|\mathcal{L}|.
        \]
        \item (elementary version 2)
        \[
        |\mathcal{I}(\mathcal{P},\mathcal{L})|
        \lesssim
        |\mathcal{L}||\mathcal{P}|^{1/2}+|\mathcal{P}|.
        \]
    \end{itemize}
\end{theorem}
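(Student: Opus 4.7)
My plan for the elementary versions is to exploit the fundamental axiom that two distinct points determine at most one line (and, symmetrically, two distinct lines meet in at most one point). For elementary version 1, I write $|\mathcal{I}(\mathcal{P},\mathcal{L})| = \sum_{L \in \mathcal{L}} r_L$ where $r_L$ counts the points of $\mathcal{P}$ lying on $L$. The axiom gives the double-counting bound $\sum_L \binom{r_L}{2} \leq \binom{|\mathcal{P}|}{2}$, which rearranges to $\sum_L r_L^2 \lesssim |\mathcal{P}|^2 + |\mathcal{I}|$. One application of Cauchy--Schwarz, $(\sum_L r_L)^2 \leq |\mathcal{L}| \sum_L r_L^2$, together with a routine case split that separates $|\mathcal{I}| \leq 2|\mathcal{L}|$ from the reverse, then produces $|\mathcal{I}| \lesssim |\mathcal{L}|^{1/2}|\mathcal{P}| + |\mathcal{L}|$. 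Elementary version 2 follows by the verbatim dual argument, now with $s_p$ counting the lines of $\mathcal{L}$ through a given point $p$, and using that two distinct lines meet in at most one point.

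For the sharp bound, my plan is to follow Sz{\'e}kely's short proof via the crossing number inequality. I form the graph $G$ whose vertex set is $\mathcal{P}$ and whose edges are the segments joining consecutive incidence points along each line of $\mathcal{L}$; each line contributing $r_L$ incidences contributes $r_L - 1$ edges, so $|E(G)| \geq |\mathcal{I}| - |\mathcal{L}|$. Since two distinct lines cross at most once, the natural drawing of $G$ produces at most $\binom{|\mathcal{L}|}{2}$ crossings, hence $\mathrm{cr}(G) \leq \binom{|\mathcal{L}|}{2}$. Invoking the crossing number inequality of Ajtai--Chv{\'a}tal--Newborn--Szemer{\'e}di, namely $\mathrm{cr}(G) \gtrsim |E(G)|^3/|V(G)|^2$ whenever $|E(G)| \geq 4|V(G)|$, and rearranging, I get $|\mathcal{I}| - |\mathcal{L}| \lesssim |\mathcal{L}|^{2/3}|\mathcal{P}|^{2/3}$. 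The complementary low-incidence regime $|\mathcal{I}| \leq 4|\mathcal{P}| + |\mathcal{L}|$ accounts for the remaining additive terms in the stated bound.

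The step I expect to feel most like a black box is the crossing number inequality itself, whose own proof combines Euler's formula with a probabilistic edge-deletion trick; in a study guide I would simply quote it. An alternative that sits closer in spirit to the combinatorial-geometric discretizations used later in \cite{OS21} is the cell-decomposition proof: sample a random subfamily of $r \sim |\mathcal{P}|^{2/3}|\mathcal{L}|^{-1/3}$ lines, decompose the plane into the $O(r^2)$ open cells cut out by the sample, apply elementary version~1 inside each cell, and sum. The main obstacle along that route is controlling, in expectation, the number of lines of $\mathcal{L}$ piercing each cell and the number of points inside it; this is the standard Clarkson--Shor-style calculation, and optimizing in $r$ produces the same exponent $2/3$. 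Either path yields the sharp bound $|\mathcal{I}| \lesssim |\mathcal{L}|^{2/3}|\mathcal{P}|^{2/3} + |\mathcal{L}| + |\mathcal{P}|$.
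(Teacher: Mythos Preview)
Your proof of the elementary versions is correct and matches the paper's approach essentially verbatim: the paper writes $|\mathcal{I}|=\sum_{l}\sum_{p}\chi_l(p)$, applies Cauchy--Schwarz over $\mathcal{L}$, splits the resulting square into diagonal and off-diagonal parts, bounds the off-diagonal part by $|\mathcal{P}|^2$ using ``two points determine at most one line,'' and finishes with the same case split you describe. Your $r_L$/$\binom{r_L}{2}$ packaging is just a relabeling of that same computation, and the paper likewise dispatches elementary version~2 by point--line duality.

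For the sharp version, there is nothing to compare against: the paper does not prove the sharp Szemer\'edi--Trotter bound at all. It is stated only as background, and the subsequent sections use it purely heuristically to motivate the conjectural exponents before turning to the discretized elementary bound. Your Sz\'ekely/crossing-number sketch (and the cell-decomposition alternative) is correct and standard, but it is additional material beyond what the paper itself supplies.
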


\subsection{Heuristic proof for Furstenberg set problem} 
The main result in this subsection is to see how the exponents in the Furstenberg set problem come heuristically from Szemer\'edi-Trotter theorem:
\begin{itemize}
    \item Sharp version corresponds to the conjectural exponent $\frac{3s+t}{2}$.
    \item Elementary version 1 corresponds to the exponent $s+\frac{t}{2}$.
    \item Elementary version 2 corresponds to the exponent $2s$.
\end{itemize}

Since the ideas are similar, we will just show the first case that the sharp version corresponds to the conjectural exponent $\frac{3s+t}{2}$. The first heuristic we will use is that we treat any $\delta$-balls as a point, and any $\delta$-tube as a line. 
\footnote{$\delta$-ball here means the $\delta$ neighborhood of a point, and $\delta$-tube means the $\delta$ neighborhood of a line segment. One can find a more precise definition in Section 3. }
The second heuristic we will use is that we treat the Hausdorff dimension as the box dimension. 

After discretizing the $(s,t)$-Furstenberg set in the $\delta$-scale, we have $\sim \delta^{-t}$ many $\delta$-tubes and each $\delta$-tubes contain $\sim \delta^{-s}$ many $\delta$-balls. Therefore, we have the following lower bound for the incidence:
\[
    \delta^{-s} \cdot |\mathcal{L}|
    \lesssim |\mathcal{I}(\mathcal{P},\mathcal{L})|.
\]

However, by the sharp version of the Szemer\'edi-Trotter theorem, we have
\[
    |\mathcal{I}(\mathcal{P},\mathcal{L})|
    \lesssim
    |\mathcal{L}|^{2/3}|\mathcal{P}|^{2/3}+|\mathcal{L}|+|\mathcal{P}|.
\]

If we do a case-by-case study, we will find $|\mathcal{L}|^{2/3}|\mathcal{P}|^{2/3}$ is the dominant term for the upper bound of the incidence. Therefore, we have
\[
    \delta^{-s} \cdot |\mathcal{L}| \lesssim
    |\mathcal{L}|^{2/3}|\mathcal{P}|^{2/3}
\]
and combing that $|\mathcal{L}| \sim \delta^{-t}$ gives us $|\mathcal{P}| \gtrsim \delta^{-\frac{3s+t}{2}}$, which essentially says $dim_{\mathcal{H}}F\geq \frac{3s+t}{2}$.

\subsection{Heuristic proof for the exceptional set of the orthogonal projection}
The main result in this subsection is to see how the exponents in the exceptional set of orthogonal projection come heuristically from Szemer\'edi-Trotter theorem:
\begin{itemize}
    \item Sharp version corresponds to Oberlin's conjecture \eqref{Proj_conj}.
    \item Elementary version 1 corresponds to Oberlin's estimate \eqref{Ober_est}.
    \item Elementary version 2 corresponds to Kaufman's estimate \eqref{Kauf_est}.
\end{itemize}
Again, since the ideas are similar, we will only show how sharp version corresponds to Oberlin's conjecture. Now let $K \subset \mathbb{R}^2$ with $dim_{\mathcal{H}}K=t$. Denote the exceptional set $\Theta=\{ e \in \mathbb{S}^1 \,|\, \dim_{\mathcal{H}} \pi_e(K) < s\}$ and set $\alpha=dim_{\mathcal{H}}\Theta$. Our goal is to show that $\alpha \leq 2s-t$.

Now suppose $e \in \Theta$ is an exceptional direction, then the discretization of $K$ at $\delta$-scale can be covered by $\lesssim \delta^{-s}$ many parallel $\delta$-tubes with the direction perpendicular to $e$. Since there are $\delta^{-\alpha}$ exceptional direction, the total amount of tubes $|\mathcal{L}|\lesssim \delta^{-s-\alpha}$. Since $dim_{\mathcal{H}}K=t$, $|\mathcal{P}| \sim \delta^{-t}$. Finally, since every $\delta$-ball is incident to $\sim \delta^{-\alpha}$ $\delta$-tubes coming from each exceptional direction, so we have the lower bound for the incidence:
\[
    \delta^{-\alpha} \cdot |\mathcal{P}|
    \lesssim |\mathcal{I}(\mathcal{P},\mathcal{L})|.
\]

Again, by the sharp version of the Szemer\'edi-Trotter theorem with the dominant term $|\mathcal{L}|^{2/3}|\mathcal{P}|^{2/3}$ by case-by-case study, we have
\[
    \delta^{-\alpha} \cdot |\mathcal{P}|
    \lesssim
    |\mathcal{I}(\mathcal{P},\mathcal{L})|
    \lesssim
    |\mathcal{L}|^{2/3}|\mathcal{P}|^{2/3}.
\]
Combined with $|\mathcal{L}|\lesssim \delta^{-s-\alpha}$ and $|\mathcal{P}| \sim \delta^{-t}$, we then have
\[
    \delta^{-\alpha} \cdot \delta^{-t}
    \lesssim
    \delta^{-2(s+\alpha)/3}\cdot \delta^{-2t/3},
\]
which is equivalent to $\alpha \leq 2s-t$, and we are done.
\begin{remark}
    We should mention that it is easier to make the second heuristic rigorous by pigeonholing arguments, while making the first heuristic corresponding to the sharp version of the Szemer\'edi-Trotter theorem rigorous is much more difficult. However, we can make the first heuristic rigorous in the case corresponding to the elementary version, and this is the main subject of the next section.
\end{remark}

\begin{remark}
    We encourage the reader to read Section 3 in \cite{OS21} to see how to use the pigeonholing argument to reduce the Hausdorff dimension statement to the box dimension statement (or discretized statement) where our heuristic can be applied.
\end{remark}

\section{Discretized Szemer\'edi-Trotter theorem} \label{Section 3}

\subsection{Elementary bound for the Szemer\'edi-Trotter theorem}
Here we give an elementary bound of the Szemer\'edi-Trotter theorem.
\begin{theorem}[Elementary bound for Szemer\'edi-Trotter theorem] 
    \[
    |\mathcal{I}(\mathcal{P},\mathcal{L})|
    \lesssim
    |\mathcal{L}|^{1/2}|\mathcal{P}|+|\mathcal{L}|.
    \]
    \[
    |\mathcal{I}(\mathcal{P},\mathcal{L})|
    \lesssim
    |\mathcal{L}||\mathcal{P}|^{1/2}+|\mathcal{P}|.
    \]
\end{theorem}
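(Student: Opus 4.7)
The plan is to establish both bounds by the standard double-counting / Cauchy-Schwarz argument, exploiting two dual geometric facts: (a) two distinct points determine at most one line, and (b) two distinct lines meet in at most one point. By symmetry (swapping the roles of points and lines) the two displayed inequalities are proved identically, so I would prove the first one, $|\mathcal{I}(\mathcal{P},\mathcal{L})| \lesssim |\mathcal{L}|^{1/2}|\mathcal{P}|+|\mathcal{L}|$, in detail, and just indicate that the second follows by the analogous argument using (b) in place of (a).

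For the first bound, I would set $r_L := \#\{p \in \mathcal{P} : p \in L\}$, so that $|\mathcal{I}(\mathcal{P},\mathcal{L})| = \sum_{L \in \mathcal{L}} r_L$. The key observation is that counting pairs of points lying on a common line, using (a), gives
\[
    \sum_{L \in \mathcal{L}} \binom{r_L}{2} \;\leq\; \binom{|\mathcal{P}|}{2} \;\leq\; |\mathcal{P}|^2,
\]
hence $\sum_L r_L^2 \leq |\mathcal{P}|^2 + \sum_L r_L = |\mathcal{P}|^2 + |\mathcal{I}(\mathcal{P},\mathcal{L})|$. Next I would apply Cauchy–Schwarz:
\[
    |\mathcal{I}(\mathcal{P},\mathcal{L})|^2 = \Bigl(\sum_{L \in \mathcal{L}} r_L\Bigr)^2 \;\leq\; |\mathcal{L}| \sum_{L \in \mathcal{L}} r_L^2 \;\leq\; |\mathcal{L}|\bigl(|\mathcal{P}|^2 + |\mathcal{I}(\mathcal{P},\mathcal{L})|\bigr).
\]

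Finally I would solve the resulting quadratic inequality $X^2 \leq AX + B$ (with $X=|\mathcal{I}(\mathcal{P},\mathcal{L})|$, $A=|\mathcal{L}|$, $B=|\mathcal{L}||\mathcal{P}|^2$) to conclude $X \lesssim \sqrt{B} + A = |\mathcal{L}|^{1/2}|\mathcal{P}| + |\mathcal{L}|$, using the standard dichotomy: either $X^2/2 \leq AX$, giving $X \leq 2A$, or $X^2/2 \leq B$, giving $X \leq \sqrt{2B}$. The second inequality is proved by the dual argument: setting $n_p := \#\{L \in \mathcal{L} : p \in L\}$, fact (b) yields $\sum_p \binom{n_p}{2} \leq \binom{|\mathcal{L}|}{2}$, and Cauchy–Schwarz applied to $\sum_p n_p$ with the weight $|\mathcal{P}|$ produces the symmetric bound.

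There is no real obstacle here: both steps (the incidence-to-pairs inequality and Cauchy–Schwarz) are one-line applications, and the only mild subtlety is the elementary manipulation of the quadratic inequality at the end. The proof is essentially the textbook Kővári–Sós–Turán-style argument, and it is worth emphasizing that this elementary bound is strictly weaker than the sharp Szemerédi–Trotter bound $|\mathcal{I}| \lesssim |\mathcal{L}|^{2/3}|\mathcal{P}|^{2/3} + |\mathcal{L}| + |\mathcal{P}|$, which requires the crossing-number inequality or cell decompositions and is not needed in this section.
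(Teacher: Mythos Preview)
Your proof is correct and follows essentially the same approach as the paper: both apply Cauchy--Schwarz to the incidence count, use the fact that two distinct points lie on at most one common line to bound the second moment $\sum_L r_L^2 \leq |\mathcal{P}|^2 + |\mathcal{I}|$, and then resolve the resulting quadratic inequality $|\mathcal{I}|^2 \lesssim |\mathcal{L}|(|\mathcal{I}|+|\mathcal{P}|^2)$ by the same dichotomy. The only cosmetic difference is that the paper writes the Cauchy--Schwarz step using indicator functions $\chi_l(p)$ and splits the square into diagonal and off-diagonal terms, whereas you phrase it via $\binom{r_L}{2}$; the content is identical.
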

\begin{proof}
First of all, by point line duality, the above two inequalities are essentially equivalent (by changing the role of point and line). Therefore, it suffices to prove the first version.\\
Note that $|\mathcal{I}(\mathcal{P},\mathcal{L})|=\sum_{l\in\mathcal{L}}\sum_{p\in\mathcal{P}} \chi_{l}(p)$, where $\chi_{l}(p)=1$ if $p\in l$ and $\chi_{l}(p)=0$ if $p\notin l$.

By Cauchy-Schwarz, we have
\[
    \sum_{l\in\mathcal{L}}\sum_{p\in\mathcal{P}} \chi_{l}(p) \leq
    (\sum_{l\in\mathcal{L}}1)^{1/2}
    (\sum_{l\in\mathcal{L}}\sum_{p,p'\in\mathcal{P}}\chi_{l}(p)\chi_{l}(p'))^{1/2}.
\]

Observe that the first term $\sum_{l\in\mathcal{L}}1=|\mathcal{L}|$ and the second term can be split into the diagonal part ($p=p'$) and the off-diagonal part ($p\neq p'$). \\For the diagonal part,
\[
    \sum_{l\in\mathcal{L}}\sum_{p=p'\in\mathcal{P}}\chi_{l}(p)\chi_{l}(p')
    =\sum_{l\in\mathcal{L}}\sum_{p\in\mathcal{P}} \chi_{l}(p)=
    |\mathcal{I}(\mathcal{P},\mathcal{L})|.
\]
For the off-diagonal part, the key idea is that at most one line can pass through any given pair of two distinct points ($\sum_{l\in\mathcal{L}}\chi_{l}(p)\chi_{l}(p')\leq1$ when $p\neq p'$):
\[
    \sum_{l\in\mathcal{L}}\sum_{p\neq p'\in\mathcal{P}}\chi_{l}(p)\chi_{l}(p')
    =\sum_{p\neq p'\in\mathcal{P}}\sum_{l\in\mathcal{L}}\chi_{l}(p)\chi_{l}(p') \leq
    \sum_{p\neq p'\in\mathcal{P}}1
    \thicksim
    |\mathcal{P}|^2.
\]
To sum up, what we have done is as follows:
\[
    |\mathcal{I}(\mathcal{P},\mathcal{L})|^2
    \lesssim
    |\mathcal{L}|
    (|\mathcal{I}(\mathcal{P},\mathcal{L})|+|\mathcal{P}|^2)
\]

Finally, note that either $|\mathcal{I}(\mathcal{P},\mathcal{L})|^2
    \lesssim
    |\mathcal{L}|
    |\mathcal{I}(\mathcal{P},\mathcal{L})|$
or $|\mathcal{I}(\mathcal{P},\mathcal{L})|^2
    \lesssim
    |\mathcal{L}|
    |\mathcal{P}|^2$, which corresponds to either $|\mathcal{I}(\mathcal{P},\mathcal{L})|
    \lesssim
    |\mathcal{L}|$ 
or $|\mathcal{I}(\mathcal{P},\mathcal{L})|
    \lesssim
    |\mathcal{L}|^{1/2}
    |\mathcal{P}|$.
Summing up these two cases, we get the desired bound.

\end{proof}

\subsection{Elementary bound for the discretized Szemer\'edi-Trotter theorem}
As we see, the key geometric consideration in the elementary Szemer\'edi-Trotter theorem is that, at most, one line can pass through any given pair of two distinct points. However, this is not the case in the discretized setting. 

Let $\delta \in 2^{-\mathbb{N}}$ be a dyadic number and let $\mathcal{D}_\delta( \mathbb{R}^2) $ be the family of half-open dyadic $\delta$-cubes in $\mathbb{R}^2$. If $A $ is a set in $\mathbb{R}^2$, we denote $\mathcal{D}_\delta (A) $ the family of $p \in \mathcal{D}_\delta(\mathbb{R}^2)$ such that $A \cap p \neq \emptyset$ and denote $|A|_\delta  = |\mathcal{D}_\delta(A)|$. If $A = [0,1)^2$, we abbreviate $ D_\delta([0,1)^2)$ to $D_\delta$.

Similarly, we also define $\delta$-tubes. For $q \in \mathcal{D}_\delta(\mathbb{R}^2)$, a $\delta$-tube $T(q)$ is a set of form
\[
\bigcup_{(a,b) \in q}\{ (x,y) \in \mathbb{R}^2 : y=ax+b \}.
\]
We define the family of $\delta$-tubes $\mathcal{T}^\delta$ by $\{ T(q) : q \in \mathcal{D}_\delta([-1,1] \times \mathbb{R}) \} $.

Now, let us give a definition of the discretized set first.
\begin{definition}[$(\de,s,C)$-set]
\label{def-1}
Let $P\subset\ZR^d$ be a bounded set, $d\geq1$. Let $\de>0$ and let $0\leq s\leq d$ and $C>0$. We say that $P$ is a $(\de,s,C)$-set if for any $r$-cube $Q$
\begin{equation*}
    |P\cap Q|_\de\leq Cr^s|P|_\de.
\end{equation*}
Similarly, let $\mathcal{L}$ be a set of lines in $\mathbb{R}^2$. we say that $\mathcal{L}$ is a $(\delta, s, C)$ set if the set of points $(a,b)$ such that $\{(x,y) \in \mathbb{R}^2 : y=ax+b \} \in \mathcal{L}$ is a $(\delta ,s ,C)$-set. 

\end{definition}

Now assume that $\mathcal{P}$ is a $(\delta,t)$-set
\footnote{We usually denote a $(\de,s,C)$-set as $(\delta,s)$-set if the constant $C$ is not important.}
and $\forall p \in \mathcal{P}$, there exists a $(\delta,s)$-set of tubes $ \mathcal{T}(p)$ such that $p \in T$ for all $T \in \mathcal{T}(p)$. We set $\mathcal{T}=\bigcup_{p \in \mathcal{P}} \mathcal{T}(p)$. Define the incidence $\mathcal{I}(\mathcal{P},\mathcal{T})=\{(p,T)\in \mathcal{P} \times \mathcal{T}:T \in \mathcal{T}(p)\}$. Now let us study that given two distinct $\delta$ cubes $p$ and $p'$, how many $\delta$-tube can pass through them: $\sum_{T\in\mathcal{T}}\chi_{T}(p)\chi_{T}(p')$.

Observe that if a $\delta$-tube passes two distinct $\delta$ cubes, then its slope can only range from an interval of size $\thicksim \frac{\delta}{d(p,p')}$. Moreover, since $\mathcal{T}(p)$ is a $(\delta,s)$-set, the amount of tubes that can pass through $p$ and $p'$ are smaller than $|\mathcal{T}(p)| \cdot (\frac{\delta}{d(p,p')})^{s}$. To sum up, 
\[
    \sum_{T\in\mathcal{T}}\chi_{T}(p)\chi_{T}(p') \lesssim
    |\mathcal{T}(p)| \cdot \left(\frac{\delta}{d(p,p')}\right)^{s}.
\]
Therefore, the off-diagonal part becomes
\[
    \sum_{T\in\mathcal{T}}\sum_{p\neq p'\in\mathcal{P}}\chi_{T}(p)\chi_{T}(p')= 
    \sum_{p\in\mathcal{P}} \sum_{p'\in\mathcal{P}: p'\neq p}
    \sum_{T\in\mathcal{T}}\chi_{T}(p)\chi_{T}(p') \lesssim
    \sum_{p\in\mathcal{P}} \sum_{p'\in\mathcal{P}: p'\neq p}
    |\mathcal{T}(p)| \cdot \left(\frac{\delta}{d(p,p')}\right)^{s}.
\]
Since $\mathcal{P}$ is a $(\delta,t)$-set, we have
\[
    \sum_{p'\in\mathcal{P}: p'\neq p}
    \left(\frac{1}{d(p,p')}\right)^{s} \thicksim
    \sum_{j}|\{p'\in\mathcal{P}:d(p,p')\sim2^{-j}\}|\cdot
    2^{js} 
    \lesssim
    \sum_{j} |\mathcal{P}|\cdot 2^{j(s-t)}
    \lesssim |\mathcal{P}|,
\]
where in the last step, we use the assumption that $s<t$. Therefore, we have
\[
    \sum_{T\in\mathcal{T}}\sum_{p\neq p'\in\mathcal{P}}\chi_{T}(p)\chi_{T}(p')
    \lesssim |\mathcal{P}| \cdot \delta^s
    \sum_{p\in\mathcal{P}}
    |\mathcal{T}(p)|.
\]

In particular, if we assume that $|\mathcal{T}(p)|\thicksim M$ for some constant $M$ (which can be done by passing to a refinement of $\mathcal{P}$ after pigeonholing), then the contribution of the off-diagonal part becomes
\[
    \sum_{T\in\mathcal{T}}\sum_{p\neq p'\in\mathcal{P}}\chi_{T}(p)\chi_{T}(p')
    \lessapprox_{\delta}
    |\mathcal{P}|^2 \cdot \delta^s \cdot M.
\]

Now we record what we have done below:
\begin{proposition}
\label{discretized-ST}
    Let $0< s \leq t$.
    \footnote{Note that in the argument above, we require $s<t$ to illustrate the idea. However, the argument can be extended to the case when $s\leq t$ with more involved analysis. We encourage readers to read Proposition 2.13 in \cite{OS21}.}
    Assume that $\mathcal{P}$ is a $(\delta,t)$-set and $\forall p \in \mathcal{P}$, there exists a $(\delta,s)$-set of tubes $\mathcal{T}(p)$ such that $p \in T$ for all $T \in \mathcal{T}(p)$. Set $\mathcal{T}=\bigcup_{p \in \mathcal{P}} \mathcal{T}(p)$. 
    Also assume that $|\mathcal{T}(p)|\thicksim M$ for some constant $M$. Then 
    \[
    |\mathcal{I}(\mathcal{P},\mathcal{T})|
    \lessapprox_{\delta}
    M\cdot \delta^s |\mathcal{T}|^{1/2} |\mathcal{P}|+|\mathcal{T}|.
    \]
\end{proposition}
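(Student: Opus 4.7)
The plan is to run exactly the Cauchy--Schwarz proof used for the elementary Szemer\'edi--Trotter theorem earlier in the section, with the ``one line through two points'' step now replaced by the $(\delta,s)$-set estimate that controls the number of tubes in $\mathcal{T}(p)$ simultaneously passing through a second $\delta$-cube $p'$. Concretely, starting from
\[
|\mathcal{I}(\mathcal{P}, \mathcal{T})| = \sum_{T \in \mathcal{T}} \sum_{p \in \mathcal{P}} \chi_T(p),
\]
I would apply Cauchy--Schwarz in $T$ to obtain
\[
|\mathcal{I}(\mathcal{P}, \mathcal{T})|^2 \leq |\mathcal{T}| \sum_{T \in \mathcal{T}} \sum_{p, p' \in \mathcal{P}} \chi_T(p)\,\chi_T(p'),
\]
and then split the inner double sum into the diagonal contribution ($p = p'$), which equals $|\mathcal{I}(\mathcal{P}, \mathcal{T})|$, and the off-diagonal contribution ($p \neq p'$).

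The off-diagonal contribution is precisely what the discussion preceding the proposition already controls. For fixed distinct $p, p'$, any tube in $\mathcal{T}(p)$ that also meets $p'$ must have its slope lying in an interval of size $\sim \delta/d(p,p')$, so the $(\delta,s)$-set hypothesis on $\mathcal{T}(p)$ combined with $|\mathcal{T}(p)| \sim M$ bounds the number of such tubes by $\lesssim M \bigl(\delta/d(p,p')\bigr)^{s}$. Summing first over $p' \neq p$ via a dyadic decomposition in $d(p,p')$ and using the $(\delta,t)$-set property of $\mathcal{P}$ produces a geometric series $\sum_{j} 2^{j(s-t)}$, and then summing over $p$ yields the bound $\lessapprox_\delta M\,\delta^{s}\,|\mathcal{P}|^{2}$ on the off-diagonal sum.

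Feeding this back into the Cauchy--Schwarz inequality gives
\[
|\mathcal{I}(\mathcal{P}, \mathcal{T})|^{2} \lessapprox_\delta |\mathcal{T}|\,\Bigl(|\mathcal{I}(\mathcal{P}, \mathcal{T})| + M\,\delta^{s}\,|\mathcal{P}|^{2}\Bigr),
\]
which, exactly as in the closing step of the elementary Szemer\'edi--Trotter proof, splits into two alternatives according to which term on the right dominates, producing respectively the two summands on the right-hand side of the desired inequality.

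The main obstacle is handling the critical case $s = t$, for which the geometric series $\sum_{j} 2^{j(s-t)}$ degenerates from a convergent series into one with $\sim \log(1/\delta)$ many nontrivial terms. Since pairs $p, p'$ with $d(p,p') < \delta$ are meaningless in the discretized setting, one truncates the dyadic sum at the scale $2^{-j} \sim \delta$; this costs only a factor of $\log(1/\delta) \lesssim_\epsilon \delta^{-\epsilon}$, which is absorbed into the $\lessapprox_\delta$ notation. This logarithmic loss is exactly why the conclusion is stated with $\lessapprox_\delta$ rather than $\lesssim$, and is the only substantive difference between the $s < t$ argument sketched in the body of the text and the full $s \leq t$ statement of the proposition.
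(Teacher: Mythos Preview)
Your proposal is correct and follows essentially the same argument as the paper: the proposition is stated there as a summary (``Now we record what we have done below'') of precisely the Cauchy--Schwarz plus diagonal/off-diagonal computation you describe, with the off-diagonal term controlled via the $(\delta,s)$-set bound on $\mathcal{T}(p)$ and the $(\delta,t)$-set bound on $\mathcal{P}$. Your treatment of the borderline case $s=t$ by truncating the dyadic sum at scale $\delta$ and absorbing the resulting $\log(1/\delta)$ factor into $\lessapprox_\delta$ is exactly the refinement the paper's footnote alludes to.
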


As a result, we have $M\cdot |\mathcal{P}| \leq  |\mathcal{I}(\mathcal{P},\mathcal{T})|\lessapprox_{\delta}
    M\cdot \delta^s |\mathcal{T}|^{1/2} |\mathcal{P}|$ (the first term on the right-hand side dominates), which is equivalent to that $\delta^{-2s}\lessapprox_{\delta}|\mathcal{T}|$:
    and this gives us essentially $dim_{\mathcal{H}}(F) \geq 2s$, where $F$ is an $(s,t)$-Furstenberg set.

Finally, with a little extra effort, we can modify our discretized incidence estimate to the following more complicated version, which is better for our induction on scale scheme later (Corollary 2.14 in the original paper \cite{OS21}):

\begin{corollary}\label{cor_T_est}
    Let $0 \leq s \leq t \leq 1$, and let $C_p,C_T \geq 1$. Let $\cp \subset \cd_\delta$ be a $(\delta, t,C_p)$-set. Assume that for every $p \in \cp$ there exist a $(\delta, s, C_T)$-set $\cT(p) \subset \cT^\delta$ of dyadic $\delta$-tubes with the properties that $\cT \cap p \neq \emptyset$ for all $\cT \in \cT(p)$, and $|\cT(p) | \sim M$ for some $M \geq 1$. Then,
    \begin{equation*}
        |T| \gtrapprox_\delta (C_PC_T)^{-1}M\delta^{-s} (M\delta^s)^{\frac{t-s}{1-s}}.
    \end{equation*}
\end{corollary}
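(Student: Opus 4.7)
The plan is to deduce Corollary~\ref{cor_T_est} directly from Proposition~\ref{discretized-ST} by extracting both sides of the dichotomy hidden in its two-term upper bound. Starting from
\[
M|\cp| \;\leq\; |\mathcal{I}(\cp,\cT)| \;\lessapprox_\delta\; M\delta^s |\cT|^{1/2} |\cp| \;+\; |\cT|,
\]
where the left inequality is the trivial lower bound coming from $|\cT(p)| \sim M$ for every $p\in\cp$, at least one of the two summands on the right must dominate the left-hand side. This yields the dichotomy
\[
|\cT| \;\gtrapprox_\delta\; \delta^{-2s} \qquad \text{or} \qquad |\cT| \;\gtrapprox_\delta\; M|\cp|.
\]
In the second case, the $(\delta,t,C_P)$-set hypothesis forces $|\cp| \geq C_P^{-1}\delta^{-t}$ (apply the defining inequality with $r=\delta$ to a covering of $\cp$ by $\delta$-cubes), so in either situation $|\cT|$ is bounded below by one of the two quantities $\delta^{-2s}$ or $C_P^{-1}M\delta^{-t}$.

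The remaining task is purely arithmetic: one must verify that the target quantity $(C_PC_T)^{-1} M\delta^{-s}(M\delta^s)^{(t-s)/(1-s)}$ is dominated by each of these two candidate lower bounds. Setting $\theta := (t-s)/(1-s) \in [0,1]$, a direct calculation gives the ratio identities
\[
\frac{M\delta^{-s}(M\delta^s)^\theta}{\delta^{-2s}} = (M\delta^s)^{1+\theta}, \qquad \frac{M\delta^{-s}(M\delta^s)^\theta}{M\delta^{-t}} = (M\delta)^\theta.
\]
Since any family of $\delta$-tubes through a fixed $\delta$-cube has cardinality $\lesssim \delta^{-1}$, we have $M \lesssim \delta^{-1}$, hence $M\delta^s \leq 1$ and $M\delta \leq 1$. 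Because $\theta \geq 0$, both ratios above are $\leq 1$, so the target is bounded above by each of $\delta^{-2s}$ and $M\delta^{-t}$ up to absolute constants, and the factor $(C_PC_T)^{-1}$ absorbs the remaining implicit constants from Proposition~\ref{discretized-ST} together with $C_P^{-1}$ from the lower bound on $|\cp|$.

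The principal matter requiring care is tracking the $C_P$ and $C_T$ constants: the factor $C_T^{-1}$ enters through the implicit constant in Proposition~\ref{discretized-ST} itself (originating in the $(\delta,s,C_T)$-set bound on $\sum_T \chi_T(p)\chi_T(p')$ used in the off-diagonal estimate), while $C_P^{-1}$ comes from the elementary lower bound $|\cp| \geq C_P^{-1}\delta^{-t}$. No further combinatorial input beyond the proposition is needed. Conceptually, the corollary simply packages the two ``trivial'' lower bounds $\delta^{-2s}$ and $M\delta^{-t}$ into a single weighted geometric mean indexed by $\theta$; this interpolated form is strictly weaker than either endpoint, but it is precisely the flexible shape required for the multiscale induction-on-scales scheme developed in Sections~7--9 of \cite{OS21}.
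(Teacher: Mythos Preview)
Your argument has a genuine gap at the step ``$M \lesssim \delta^{-1}$, hence $M\delta^s \leq 1$''. From $M\lesssim\delta^{-1}$ one only gets $M\delta^s \lesssim \delta^{s-1}$, which is large when $s<1$. In fact the $(\delta,s,C_T)$-set hypothesis forces $M\geq C_T^{-1}\delta^{-s}$, so $M\delta^s\geq C_T^{-1}$ always, and the entire content of the corollary beyond the basic bound $|\mathcal{T}|\gtrapprox (C_PC_T)^{-1}M\delta^{-s}$ lies precisely in the regime $M\delta^s\gg 1$. This is exactly how the corollary is invoked later: in the proof of Theorem~\ref{thm_card_regT} one applies it when $\mathbf{M}\geq\delta^{-s/2-C\epsilon/2}$, i.e.\ when $\mathbf{M}\delta^{s/2}$ is a positive power of $\delta^{-1}$. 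In that regime your first ratio $(M\delta^s)^{1+\theta}$ is much larger than $1$, so the branch $|\mathcal{T}|\gtrapprox\delta^{-2s}$ of the dichotomy says nothing about the target, and the black-box appeal to Proposition~\ref{discretized-ST} cannot produce the extra factor $(M\delta^s)^\theta$.

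The ``little extra effort'' the paper refers to is not a dichotomy on the output of Proposition~\ref{discretized-ST} but a refinement \emph{inside} its Cauchy--Schwarz proof. One has two bounds for the pair count:
\[
|\mathcal{T}(p)\cap\mathcal{T}(p')|\ \lesssim\ \min\Big(C_T M\big(\tfrac{\delta}{d(p,p')}\big)^s,\ \tfrac{1}{d(p,p')}\Big),
\]
the first from the $(\delta,s,C_T)$-set property of $\mathcal{T}(p)$ and the second the trivial count of all $\delta$-tubes through two cubes at distance $d$. Splitting the off-diagonal sum at the crossover scale $\rho_0=(C_TM\delta^s)^{-1/(1-s)}$ and using the $(\delta,t,C_P)$-set property of $\mathcal{P}$ at every dyadic distance, both halves contribute $\lessapprox C_P(C_TM\delta^s)^{1-\theta}|\mathcal{P}|^2$; inserting this sharpened off-diagonal bound into Cauchy--Schwarz gives $|\mathcal{T}|\gtrapprox (C_PC_T)^{-1}M\delta^{-s}(M\delta^s)^\theta$ directly. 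The exponent $\theta=(t-s)/(1-s)$ emerges from the truncated geometric sums $\sum_j 2^{j(s-t)}$ and $\sum_j 2^{j(1-t)}$ evaluated at $\rho_0$, which is where the $t$-dependence that your argument loses actually enters.
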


For the rest of the paper, we will prove the following theorem, one of the main theorems in \cite{OS21}. Theorem \ref{MainT_Furst} and \ref{MainT_proj} follow from this theorem.

\begin{theorem}\label{MainT_tube}
    For $s \in (0,1)$ and $t \in (s,2]$, there exists $\epsilon(s,t) >0$ such that the following holds for all small enough $\delta \in 2^{-\mathbb{N}}$ depending only on $s$ and $t$. Let $\cp \subset \cd_\delta$ be a $(\delta, t, \delta^{-\epsilon})$-set with $\cup \cp \subset [0,1)^2$, and $\cT \subset \cT^\delta$ be a family of dyadic $\delta$-tubes. Assume that for every $p \in \cp$, there exists a $(\delta, s, \delta^{-\epsilon})$-set $\cT(p) \subset \cT$ such that $T \cap p \neq \emptyset$ for all $T \in \cT(p)$. Then $|\cT| \geq \delta^{-2s-\epsilon}$.
\end{theorem}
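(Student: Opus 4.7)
The plan is to prove Theorem~\ref{MainT_tube} by combining the elementary discretized incidence bound from Corollary~\ref{cor_T_est}, which already yields $|\cT| \gtrapprox \delta^{-2s}$ under the given hypotheses, with a strictly improved bound of the form $|\cT| \gtrapprox \delta^{-2s-\eta}$ valid under a \emph{regularity} hypothesis on $\cp$ and on the $\cT(p)$, and then running a multiscale analysis that reduces the general case to a union of regular pieces. The $\epsilon$ in the theorem arises as a definite fraction of the regular-case gain $\eta = \eta(s,t) > 0$, chosen small enough to absorb all $\delta^{-O(\epsilon)}$ losses incurred along the way.

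First I would replace $\cp$ and the fibers $\cT(p)$ by refinements with \emph{uniform branching}. After fixing a geometric sequence of scales $\delta = \Delta_0 < \Delta_1 < \cdots < \Delta_m = 1$ and performing dyadic pigeonholing, I may assume, at the cost of a factor $\delta^{-O(\epsilon)}$, that for every $j$ the number of $\Delta_j$-children of $\cp$ inside any $\Delta_{j+1}$-parent is (up to constants) a fixed quantity $N_j$, and likewise that each $\cT(p)$ has a uniform branching profile across the scales $(\Delta_j)$. This uniformization converts the hypothesis ``$(\delta, t, \delta^{-\epsilon})$-set'' into per-scale branching exponents $t_j$ with $\sum_j t_j \log(1/\Delta_{j+1}\Delta_j^{-1}) \gtrsim t \log(1/\delta)$, and similarly tube exponents $s_j$. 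The problem is thereby reduced to analyzing the sequences $(s_j)$ and $(t_j)$.

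In the regular case, when $(t_j)$ and $(s_j)$ are essentially constant across $j$, the plan is to run induction on scales. One applies Corollary~\ref{cor_T_est} at an intermediate scale $\Delta$ and, inside each $\Delta$-square rescaled to $[0,1)^2$, reapplies the same estimate at the smaller scale $\delta/\Delta$; na\"ive iteration only recovers $|\cT| \gtrapprox \delta^{-2s}$, so the goal is to beat the off-diagonal contribution at at least one scale. The dichotomy in the Appendix identifies precisely when the elementary Cauchy--Schwarz argument is lossy, and in the regular case it extracts a fixed multiplicative gain per scale; accumulating this gain across the scales $(\Delta_j)$ produces the desired bound \eqref{est_w_reg}, namely $|\cT| \gtrapprox \delta^{-2s-\eta(s,t)}$.

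The general case is then handled by the multiscale scheme of Sections~7--9. The idea is to partition $[\delta, 1]$ into consecutive blocks on which $(s_j, t_j)$ is close to constant, apply the regular improvement \eqref{est_w_reg} on each such block, and combine the partial incidence estimates via the telescoping relations of \eqref{Section 5}. The main obstacle will be this final combining step: one must select a block decomposition (Section~8) so that each block is long enough for the regular improvement to yield a nontrivial gain, yet short enough that the configuration really is regular on it, while ensuring that blocks on which only the trivial bound applies do not consume the gain produced elsewhere. The ``enemy'' is a configuration that alternates between regular and well-spaced behavior at every scale; one must argue that such alternation is incompatible with the $(\delta, t)$-set hypothesis on $\cp$ combined with the $(\delta, s)$-set hypothesis on each $\cT(p)$, and then choose $\epsilon \ll \eta(s,t)$ so that all accumulated $\delta^{-O(\epsilon)}$ losses remain strictly smaller than the gain $\delta^{-\eta}$, producing the stated conclusion $|\cT| \geq \delta^{-2s-\epsilon}$.
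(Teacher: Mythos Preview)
Your high-level architecture --- prove an $\eta$-improvement in a ``regular'' situation and then run a multiscale decomposition to reduce to it --- matches the paper. But two of your load-bearing steps are underspecified in ways that hide the real content.

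First, the regular-case improvement is not obtained by ``accumulating a fixed multiplicative gain per scale'' from the dichotomy. The paper's dichotomy \eqref{T_or_result} says that under regularity either $|\cT|_\delta \geq \delta^{-2s-\epsilon}$ or $|\cT|_{\delta^{1/2}} \geq \delta^{-s-\epsilon}$, and its proof is a contradiction argument that feeds a carefully constructed product-like configuration into \emph{Bourgain's projection theorem} (Theorem~\ref{bourgain-projection-2}). You never mention this input, and without it there is no mechanism to beat the Cauchy--Schwarz bound; iterating Corollary~\ref{cor_T_est} alone gives nothing beyond $\delta^{-2s}$. The two-scale induction (Proposition~\ref{prop_2scale}) is then used once, at scales $\delta$ and $\delta^{1/2}$, to convert the second alternative into the first --- it is not an accumulation across many $\Delta_j$.

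Second, your multiscale decomposition criterion is off. The paper does \emph{not} uniformize the tube families $\cT(p)$ or track per-scale tube exponents $s_j$; only $\cp$ is uniformized. The blocks are classified not by ``$(s_j,t_j)$ close to constant'' but into \emph{normal} ($\cp$ is an $(s,\cdot)$-set at that block, so Corollary~\ref{cor_T_est} gives the trivial bound with no loss), \emph{good} ($\cp$ is $(t_j,\cdot,\cdot)$-regular with $t_j>s$, so Theorem~\ref{thm_card_regT} applies and gives a gain), and \emph{bad} (negligible total length). The crucial point you are missing is why good blocks must exist at all: it is forced by $t>s$ together with Proposition~\ref{prop_scales}(3), which says $\prod_{j\in\cs}(\Delta_{j-1}/\Delta_j)^{t_j}\geq\delta^{\epsilon-t}$, so the structured blocks cannot all have $t_j=s$. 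Your ``enemy'' of alternating regular/well-spaced behaviour is the obstruction in the later Ren--Wang work, not here.
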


\section{The induction on scales scheme} \label{Section 4}

The main goal of this part of the study guide is to explain the proof of an induction on scales-type proposition, which corresponds to sections 4 and 5 of \cite{OS21}. The proposition is 5.2 in the original paper, and we will need it for two purposes: proving an improved incidence estimate at the ``regular" scales, and when we perform our multi-scale decomposition. The point is that it allows us to relate information at different scales. First, we make the following definition:

\begin{definition}
A $(\delta, s, C, M)$-nice configuration is a pair $(\mathcal{P}_0, \mathcal{T}_0)\subset \mathcal{D}_\delta \times \mathcal{T}^\delta$ such that for every $p\in\mathcal{P}_0$, there is some family $\mathcal{T}(p)\subset\mathcal{T}_0$ (with each tube intersecting $p$) that is a $(\delta, s, C)$-set and has cardinality $M$. 
\end{definition}

\noindent We can also define $S_Q$ to be the homothety taking the square $Q$ to the unit square, which will be useful as we think about different scales. We state the (rather lengthy) proposition here and will discuss the meaning of the various parts after.

\begin{proposition}\label{prop_2scale}
    Fix dyadic numbers $0<\delta<\Delta\leq1$. Let $(\mathcal{P}_0, \mathcal{T}_0)$ be a $(\delta, s, C_1, M)$-nice configuration. Then there exist sets $\mathcal{P}\subset\mathcal{P}_0$ and $\mathcal{T}(p)\subset\mathcal{T}_0(p)$, $p\in\mathcal{P}$ such that, denoting $\mathcal{T}=\bigcup_{p\in\mathcal{P}}\mathcal{T}(p)$ the following hold:
\begin{enumerate}
    \item $\vert\mathcal{D}_\Delta(\mathcal{P})\vert\approx_\delta\vert\mathcal{D}_\Delta(\mathcal{P}_0)\vert$ and $\vert\mathcal{P}\cap Q\vert \approx_\delta \vert\mathcal{P}_0\cap Q\vert$ for all $Q\in\mathcal{D}_\Delta(\mathcal{P})$.
    \item $\vert\mathcal{T}(p)\vert\gtrapprox_\delta\vert\mathcal{T}_0(p)\vert=M$ for $p\in\mathcal{P}$.
    \item There are $\mathcal{T}_{\Delta}\subset \mathcal{T}^\Delta$, $C_{\Delta } \approx_\delta C_1$ and $M_\Delta \geq 1$ such that  $(\mathcal{D}_\Delta(\mathcal{P}), \mathcal{T}^\Delta(\mathcal{T}))$ is $(\Delta, s, C_\Delta, M_\Delta)$-nice for some $C_\Delta\approx_\delta C_1$ and $M_\Delta\geq 1$.
    \item For each $Q \in \mathcal{D}_\Delta(\mathcal{P})$ there exist $C_Q\approx_\delta C_1$, $M_Q\geq1$ and a family of tubes $\mathcal{T}_Q\subset \mathcal{T}^{\delta/\Delta}$ such that $(S_Q(\mathcal{P}\cap Q), \mathcal{T}_Q)$ is $(\delta/\Delta, s, C_Q, M_Q)$ nice.   
\end{enumerate}
Furthermore, the families $\mathcal{T}_Q$ can be chosen so that 
\begin{equation}\label{prop_2scale_eq}
    \dfrac{\vert\mathcal{T}_0\vert}{M}\gtrapprox_\delta\dfrac{\vert\mathcal{T}^\Delta(\mathcal{T})\vert}{M_\Delta}\cdot \left( \max_{Q\in\mathcal{D}_\Delta(\mathcal{P})}\dfrac{\vert\mathcal{T}_Q\vert}{M_Q}\right).
\end{equation}
\end{proposition}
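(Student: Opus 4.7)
The plan is to prove Proposition~\ref{prop_2scale} by a sequence of dyadic pigeonhole refinements: first uniformize $\mathcal{P}_0$ at scale $\Delta$, then uniformize each tube family $\mathcal{T}_0(p)$ according to the $\Delta$-parents of its tubes, and finally verify that the refined configuration decomposes nicely across the two scales. The factorization inequality~\eqref{prop_2scale_eq} will then fall out from a double-counting argument.

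I would first apply a standard dyadic pigeonhole to $\mathcal{P}_0$ to extract a sub-collection of $\Delta$-cubes $Q \in \mathcal{D}_\Delta(\mathcal{P}_0)$ on which $|\mathcal{P}_0 \cap Q|_\delta$ is roughly constant (say $\sim N$), retaining $\gtrapprox_\delta |\mathcal{D}_\Delta(\mathcal{P}_0)|$ such cubes; setting $\mathcal{P}$ to be the restriction of $\mathcal{P}_0$ to these cubes yields part~(1). Next, I would pigeonhole the tube families: for each surviving $p$, partition $\mathcal{T}_0(p)$ according to the unique $\Delta$-tube in $\mathcal{T}^\Delta$ containing each $\delta$-tube, and pigeonhole (at the cost of a further $\gtrapprox_\delta$-fraction refinement of $\mathcal{P}$) so that (i) the number of distinct $\Delta$-parents hit by $\mathcal{T}_0(p)$ is $\sim M_\Delta$, independent of $p$, and (ii) the number of $\delta$-tubes of $\mathcal{T}_0(p)$ inside each such $\Delta$-parent is uniform in both $p$ and the parent. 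Let $\mathcal{T}(p) \subset \mathcal{T}_0(p)$ be the resulting uniformized sub-family; then $|\mathcal{T}(p)| \gtrapprox_\delta M$, yielding part~(2).

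The $(\cdot, s, C)$-set properties at the two scales are inherited from the $(\delta, s, C_1)$-set property of $\mathcal{T}_0(p)$ via two elementary scaling facts: if $\mathcal{A} \subset [-1, 1] \times \mathbb{R}$ is a $(\delta, s, C_1)$-set, then its image in $\mathcal{D}_\Delta$ is a $(\Delta, s, C_\Delta)$-set with $C_\Delta \approx_\delta C_1$ (the pigeonhole costing only $\log(1/\delta)$ factors), and the portion of $\mathcal{A}$ contained in a single $\Delta$-cube, rescaled by $S_Q$, is a $(\delta/\Delta, s, C_1 \Delta^s)$-set in $[-1,1] \times \mathbb{R}$. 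Applied to the tube parameter sets, these facts give parts~(3) and~(4), with $\mathcal{T}_Q$ defined as the $(\delta/\Delta)$-discretization of $\{S_Q(T \cap Q) : T \in \mathcal{T}, \, T \cap Q \neq \emptyset\}$. For~\eqref{prop_2scale_eq}, I would double count: the uniform counts give $|\mathcal{T}| \approx_\delta |\mathcal{T}^\Delta(\mathcal{T})| \cdot K$ and $M \approx_\delta M_\Delta \cdot K'$, where $K$ and $K'$ are the typical numbers of $\delta$-tubes of $\mathcal{T}$ and $\mathcal{T}(p)$ per $\Delta$-parent, whence $|\mathcal{T}|/M \approx_\delta (|\mathcal{T}^\Delta(\mathcal{T})|/M_\Delta) \cdot (K/K')$. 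The ratio $K/K'$ is then related to the rescaled picture inside each $Q$: the tubes of $\mathcal{T}$ hitting $Q$, rescaled to resolution $\delta/\Delta$, collapse to $\mathcal{T}_Q$, while the tubes of $\mathcal{T}(p)$ for $p \in \mathcal{P} \cap Q$ collapse to per-point families of size $M_Q$; a careful comparison yields $K/K' \gtrapprox_\delta \max_Q |\mathcal{T}_Q|/M_Q$, giving~\eqref{prop_2scale_eq}.

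The main obstacle will be organizing the successive pigeonholes so that all four conclusions of the proposition hold \emph{simultaneously} for a single refined pair $(\mathcal{P}, \mathcal{T})$, with a common $M_\Delta$ across all surviving $p$ and a well-defined $M_Q$ inside each $Q$. A secondary difficulty is controlling the collapse when passing from $\mathcal{T}$ restricted to $Q$ to $\mathcal{T}_Q$: distinct $\delta$-tubes sharing the same $\Delta$-parent can rescale to the same $(\delta/\Delta)$-tube, and the pigeonhole must be chosen so that this collapse is quantitatively captured by the $M_Q$ factor in~\eqref{prop_2scale_eq} rather than introducing extra loss.
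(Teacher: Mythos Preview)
Your outline captures the right ideas---dyadic pigeonholing to uniformize, inheritance of the $(\cdot, s, C)$-set property across scales, and the tube-packet collapse when rescaling to $\delta/\Delta$---and you have correctly flagged the two genuine difficulties. The paper's organization differs from yours in one structural way worth noting: rather than pigeonholing per point $p$, the paper first applies a preparatory lemma (Lemma~\ref{Pigeonholing before induction} here) \emph{to each $\Delta$-square $Q$ separately}, with input the family $\{\mathcal{T}_0(p) : p \in \mathcal{P}_0 \cap Q\}$. This produces, for each $Q$, a $(\Delta, s, C_\Delta)$-set $\mathcal{T}_\Delta(Q)$ of $\Delta$-tubes together with a uniform lower bound on the number of $(p,T)$-incidences inside each $\mathbf{T} \in \mathcal{T}_\Delta(Q)$. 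Only afterwards does the paper pigeonhole across the $Q$'s, fixing $M_\Delta = |\mathcal{T}_\Delta(Q)|$ and also making the number of $\delta$-tubes of $\mathcal{T}_0$ inside each surviving $\mathbf{T}$ approximately constant.

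There is, however, a genuine gap in your derivation of~\eqref{prop_2scale_eq}. You introduce $K =$ (number of $\delta$-tubes of $\mathcal{T}$ per $\Delta$-parent) and assert $|\mathcal{T}| \approx_\delta |\mathcal{T}^\Delta(\mathcal{T})| \cdot K$, but your pigeonholing was only on the families $\mathcal{T}_0(p)$ individually, not on the union $\mathcal{T}$; you have not arranged uniformity of $K$, and imposing it after the fact requires dropping $\Delta$-tubes, which disturbs the per-$p$ uniformity you already set up. More seriously, the claimed comparison $K/K' \gtrapprox_\delta \max_Q |\mathcal{T}_Q|/M_Q$ mixes a \emph{per-$\Delta$-tube} ratio (the left side aggregates contributions from \emph{all} squares $Q$ that a single $\mathbf{T}$ traverses) with a \emph{per-$Q$} ratio (the right side aggregates over all $\Delta$-tubes through a single $Q$); these two quantities are not directly comparable, and no amount of additional pigeonholing on $(K,K')$ alone will bridge them. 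The paper avoids this by arguing locally: for each $Q$ it proves
\[
\Big|\,\bigcup_{p \in \mathcal{P}\cap Q} \mathcal{T}(p)\,\Big| \ \gtrapprox_\delta\ \frac{|\mathcal{T}_Q|}{M_Q}\, M
\]
via the tube-packet mechanism (select one $\delta$-tube per packet so that the represented packets are pairwise disjoint---this is where two short lemmas on separated tubes enter), and then combines this with the global uniformity of the count of $\delta$-tubes of $\mathcal{T}_0$ per $\Delta$-parent. A smaller point: your definition of $\mathcal{T}_Q$ as \emph{all} rescaled tubes of $\mathcal{T}$ hitting $Q$ (rather than only those coming from $\bigcup_{p\in Q}\mathcal{T}(p)$) inflates $|\mathcal{T}_Q|$ with tubes irrelevant to the nice configuration in $Q$, making the inequality harder rather than easier to prove.
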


First, in what sense is this an induction on scales-type proposition? The key is the last inequality. Here, $\mathcal{T}_0$ is a family of tubes at a scale of $\delta$, $\mathcal{T}^\Delta(\mathcal{T})$ is a family of tubes at a scale of $\Delta$, and each $\mathcal{T}_Q$ is a family of tubes at a scale of $\delta/\Delta$. So, we essentially get a lower bound on incidences at one scale in terms of incidences at two other scales.

Properties (1) and (2) essentially guarantee that we don't have to remove much from our set of squares and our set of tubes in order to achieve the refinements with the desired properties. Note that if we did remove a significant amount, the main inequality above would not be particularly useful. Properties (3) and (4) guarantee that the sets of squares and tubes we obtain at the scales $\delta$ and $\delta/\Delta$ have the similarly nice properties compared to the original configuration, with reasonable constants.

To prove this proposition, we will first need a crucial lemma (Proposition 4.1 in the original paper \cite{OS21}).

\begin{lemma}\label{Pigeonholing before induction}
Let $0<\delta\leq\Delta\leq 1$ be dyadic numbers and let $C_1, M>1$. Let $\mathcal{P}$ be a finite set and assume that for every $p\in\mathcal{P}$, there is an associated $(\delta, s, C_1)$ set $\mathcal{T}(p)\subset\mathcal{T}^{\delta}$ of cardinality between $\frac{M}{2}$ and $M$ with each tube intersecting $[0, 1)^2$. Then, there is a subset $\bar{\mathcal{P}}\subset\mathcal{P}$ of cardinality $\approx_\Delta\vert \mathcal{P}\vert$ and a collection of tubes $\bar{\mathcal{T}}_{\Delta}\subset\mathcal{T}^{\Delta}$ each intersecting $[0, 1)^2$ such that 
\begin{enumerate}
    \item $\bar{\mathcal{T}}_{\Delta}$ is a $(\Delta, s, C_2)$ set with $C_2\lessapprox_\Delta C_1$.
    \item There is a constant $H\approx_\Delta M\cdot \vert\mathcal{P}\vert/\vert\bar{\mathcal{T}}_{\Delta}\vert$ such that for any $\mathbf{T}\in\bar{\mathcal{T}}_{\Delta}$,
    \begin{equation*}
    \vert\{(p, T)\in\bar{\mathcal{P}}\times \mathcal{T}^\delta: T\in\mathcal{T}(P) \text{ and } T\subset\mathbf{T}\}\vert\gtrsim H.
    \end{equation*}
\end{enumerate}
\end{lemma}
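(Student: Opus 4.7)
The plan is to prove Lemma \ref{Pigeonholing before induction} by chained dyadic pigeonholing on the multiscale incidence structure between $\mathcal{P}$ and the $\delta$-tubes. For each $p \in \mathcal{P}$ and each $\Delta$-tube $\mathbf{T} \in \mathcal{T}^\Delta$, set $n(p, \mathbf{T}) := |\{T \in \mathcal{T}(p) : T \subset \mathbf{T}\}|$, so $\sum_{\mathbf{T}} n(p, \mathbf{T}) \sim M$, and let $\mathcal{T}^\Delta(p)$ denote the $\Delta$-coarsening $\{\mathbf{T} : n(p, \mathbf{T}) \geq 1\}$. The idea is first to uniformize the fiber structure of each $\mathcal{T}(p)$ under the $\delta \to \Delta$ coarsening, and then to uniformize the multiplicity of each $\Delta$-tube across $\bigcup_p \mathcal{T}^\Delta(p)$.

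First, for each $p$ separately, I would dyadically pigeonhole $\mathbf{T} \mapsto n(p, \mathbf{T})$ to obtain parameters $n_p, k_p \geq 1$ with $n_p k_p \approx_\Delta M$ and a refinement $\tilde{\mathcal{T}}^\Delta(p) \subset \mathcal{T}^\Delta(p)$ of cardinality $\sim k_p$ on which $n(p, \cdot) \sim n_p$. A second pigeonhole over $p$ fixes the pair $(n_p, k_p) = (n, k)$ on a subset $\mathcal{P}_1 \subset \mathcal{P}$ with $|\mathcal{P}_1| \approx_\Delta |\mathcal{P}|$. Next, form $\mathcal{T}_\Delta^0 := \bigcup_{p \in \mathcal{P}_1} \tilde{\mathcal{T}}^\Delta(p)$ with multiplicity $\mu(\mathbf{T}) := |\{p \in \mathcal{P}_1 : \mathbf{T} \in \tilde{\mathcal{T}}^\Delta(p)\}|$. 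A third dyadic pigeonhole yields $\bar{\mathcal{T}}_\Delta \subset \mathcal{T}_\Delta^0$ with $\mu(\mathbf{T}) \sim \mu$ for all $\mathbf{T} \in \bar{\mathcal{T}}_\Delta$, and hence $\mu |\bar{\mathcal{T}}_\Delta| \approx_\Delta \sum_{\mathbf{T}} \mu(\mathbf{T}) = k|\mathcal{P}_1|$. A final pigeonhole over $p$ produces $\bar{\mathcal{P}} \subset \mathcal{P}_1$ with $|\bar{\mathcal{P}}| \approx_\Delta |\mathcal{P}|$ on which $|\tilde{\mathcal{T}}^\Delta(p) \cap \bar{\mathcal{T}}_\Delta| \gtrsim_\Delta k$, after which one additional prune of $\bar{\mathcal{T}}_\Delta$ (without affecting its order of magnitude) ensures $\mu_{\bar{\mathcal{P}}}(\mathbf{T}) \gtrsim \mu$ for the survivors.

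For conclusion (2), each $\mathbf{T} \in \bar{\mathcal{T}}_\Delta$ has incidence count at least $n \cdot \mu_{\bar{\mathcal{P}}}(\mathbf{T}) \gtrsim n\mu \approx_\Delta nk|\mathcal{P}|/|\bar{\mathcal{T}}_\Delta| \approx_\Delta M|\mathcal{P}|/|\bar{\mathcal{T}}_\Delta|$, which is the claimed $H$. For conclusion (1), given an $r$-cube $Q$ with $r \geq \Delta$, exchanging sums gives
\begin{equation*}
|\bar{\mathcal{T}}_\Delta \cap Q| \lesssim \mu^{-1} \sum_{p \in \mathcal{P}_1} |\tilde{\mathcal{T}}^\Delta(p) \cap Q|,
\end{equation*}
and the uniform fiber bound $n \cdot |\tilde{\mathcal{T}}^\Delta(p) \cap Q| \lesssim |\mathcal{T}(p) \cap Q|_\delta \leq C_1 r^s M$, coming from the $(\delta, s, C_1)$-set property of $\mathcal{T}(p)$, yields $|\bar{\mathcal{T}}_\Delta \cap Q| \lesssim C_1 r^s k|\mathcal{P}|/\mu \approx_\Delta C_1 r^s |\bar{\mathcal{T}}_\Delta|$, i.e., $C_2 \lessapprox_\Delta C_1$.

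The main obstacle is the chained bookkeeping: restricting to $\bar{\mathcal{P}}$ after pigeonholing on $\mu(\mathbf{T})$ can drop the effective multiplicity $\mu_{\bar{\mathcal{P}}}(\mathbf{T})$ below $\mu$ for some tubes, threatening the uniformity established earlier. Controlling this requires ordering the refinements correctly and verifying that each $\approx_\Delta 1$ loss is at most polylogarithmic in $1/\delta$, which is possible because each dyadic pigeonhole only splits into $O((\log 1/\delta)^{O(1)})$ buckets.
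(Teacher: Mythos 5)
Your proof follows essentially the same pigeonholing strategy as the paper's: uniformize the $\delta\to\Delta$ fiber structure of each $\mathcal{T}(p)$ (your $n,k$ are the paper's $m_1,m_2$), pigeonhole the covering $\Delta$-tubes by their multiplicity $\mu$, and verify the $(\Delta,s,C_2)$-property by double-counting back to the $(\delta,s,C_1)$-property of the families $\mathcal{T}(p)$ --- the paper locates a typical $p_0$ by one more pigeonhole whereas you sum directly over $p\in\mathcal{P}_1$, but these are the same count. One remark: the ``main obstacle'' you flag at the end is self-inflicted, since after pigeonholing on $\mu(\mathbf{T})$ you can simply take $\bar{\mathcal{P}}:=\mathcal{P}_1$; every $\mathbf{T}\in\bar{\mathcal{T}}_\Delta$ already has multiplicity $\gtrsim\mu$ over $\mathcal{P}_1$, giving conclusion (2) with $H\approx_\Delta n\mu$ and no need for the circular re-refinement of $\bar{\mathcal{P}}$ and $\bar{\mathcal{T}}_\Delta$, which is exactly why the paper fixes $\bar{\mathcal{P}}$ in the first pigeonhole and never revisits it. Also, the implicit losses should be measured as polylogarithmic in $1/\Delta$, not $1/\delta$: after truncating to the top $\Delta^{O(1)}$-fraction of the relevant ranges (as the paper does for $n(p,\cdot)$ via the $M\Delta^2\le 2^j\le M$ window, and similarly for $\mu$ using $|\mathcal{T}^\Delta|\lesssim\Delta^{-O(1)}$), each dyadic pigeonhole has only $O(\log(1/\Delta))$ buckets, which is what makes the conclusion hold with constants $\approx_\Delta 1$ rather than $\approx_\delta 1$.
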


The idea of this lemma is to provide a way of covering a collection of small tubes with a suitably separated collection of larger tubes. The reason we need a relatively complicated statement is that the naive version of this proposition is not even true, as illustrated below:

\begin{center}

\includegraphics[scale=.5]{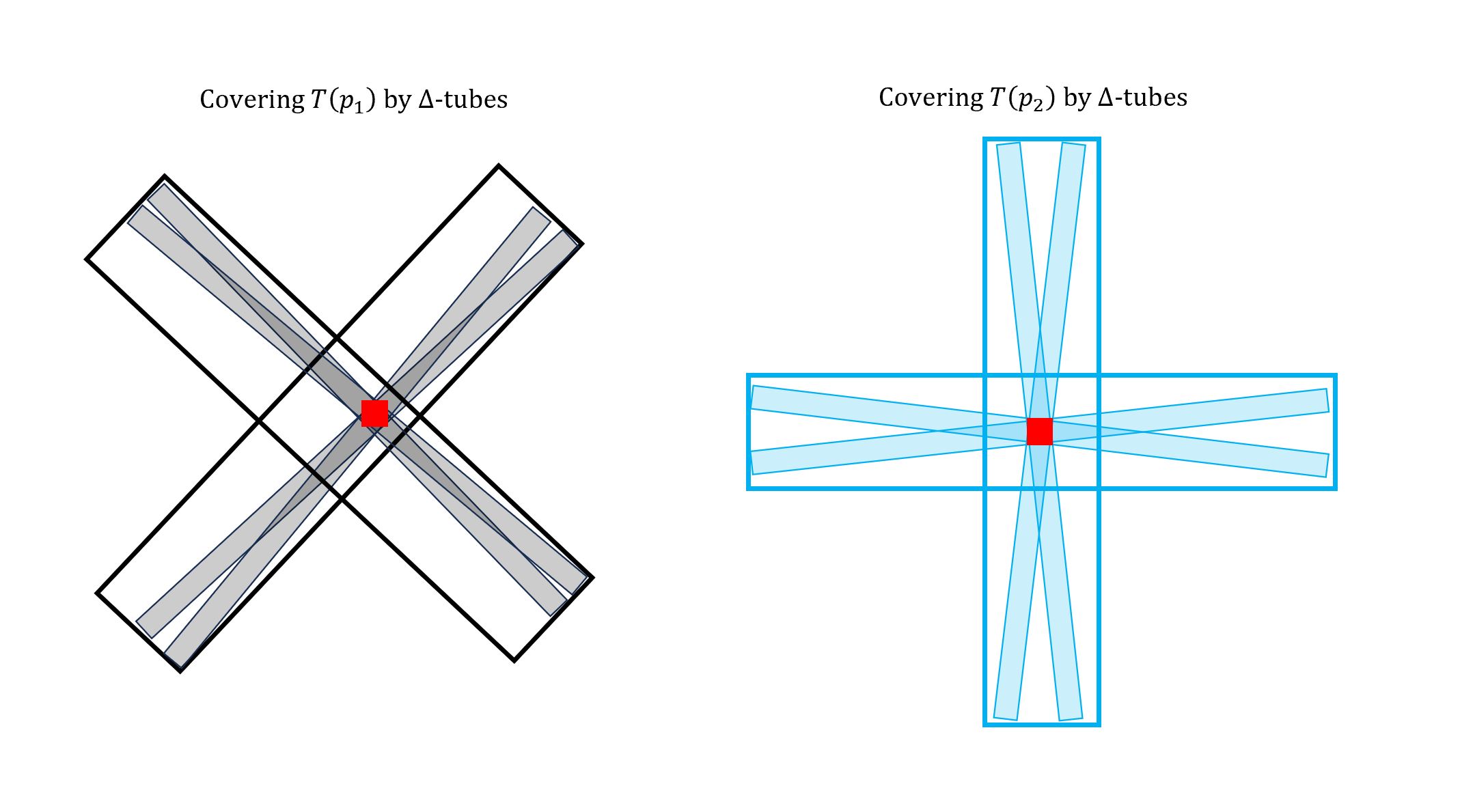}

\includegraphics[scale=.5]{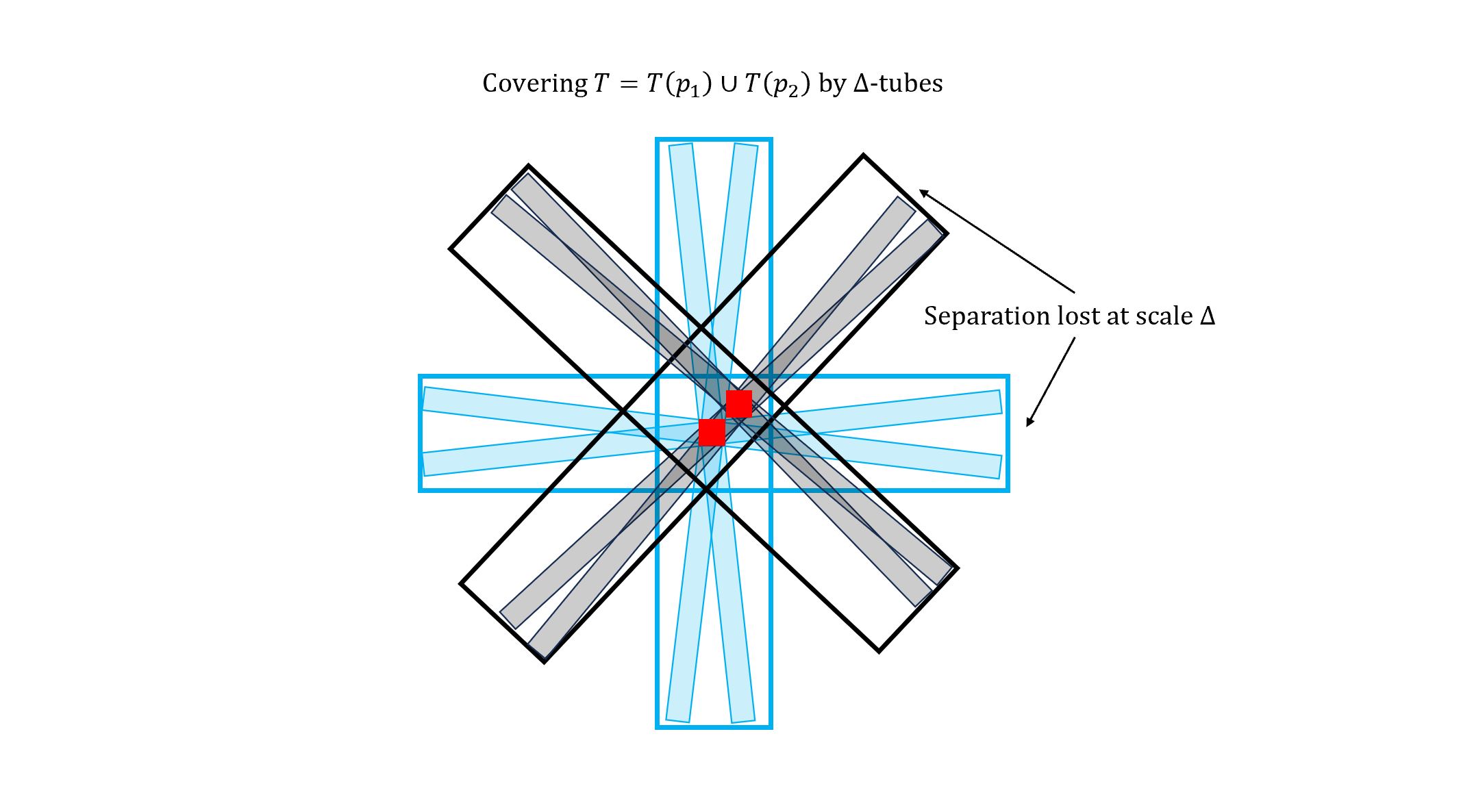}
\end{center}

So, what does each part of this lemma accomplish? Without removing too many squares, we can obtain a similarly spaced set of tubes at a larger scale (and without a much worse constant). Additionally, (2) tells us that for a fixed large tube, the number of incidences of $p$'s and small tubes within that large tube does not shrink too much with our refinement. In fact, a quick calculation reveals that $H$ is actually also an \emph{upper} bound on the \emph{average} number of incidences. This lemma will be immediately useful as we prove the main proposition of this section, so we explain the proof here. 

\begin{proof}
Suppose we start with a minimal cover of $\mathcal{T} = \bigcup_{p\in\mathcal{P}}\mathcal{T}(p)$ by a set of larger $\Delta$-tubes. We denote this cover by $\mathcal{T}_\Delta$. In order to prove the desired result, we need to increase the uniformity of our collection of squares and our collection of covering tubes, which we will do by pigeonholing a few times. 

First, we want to guarantee that our $\Delta$-tubes each contain roughly the same number of $\delta$-tubes from $\mathcal{T}(p)$. Fixing $p\in\mathcal{P}$, we can consider only the $\Delta$-tubes $\mathbf{T}\in\mathcal{T}_\Delta$ that contain between $2^{j-1}$ and $2^j$ elements of $\mathcal{T}(p)$. The largest $2^j$ we would need to consider is $M$ (the maximal size of \emph{any} $\mathcal{T}(p)$). Also, we can ignore any $2^j$'s smaller than some absolute constant times $M\Delta^2$, because there are only roughly $\Delta^2$ many $\Delta$-tubes even intersecting $[0, 1)^2$, the only tubes we may need to consider. So, some $2^j$'s are small enough that they cannot contribute much to the total number of tubes. Thus, we only have to consider the $j$'s such that $CM \Delta^2 \leq 2^j\leq M$, and there are clearly only $\approx_\Delta 1$ many of these. 

So, we can pull out a particular index $j(p)$ and consider only the $\Delta$-tubes containing approximately this many smaller tubes. This may depend on $p$, but since there are not many choices for $j$, we can pigeonhole again and remove the squares $p$ with different values for this $j$. Then, we remove the $\delta$-tubes from each $\mathcal{T}(\bar{p})$ that are not contained in the remaining $\Delta$-tubes, where $\bar{p}$ is in our refined collection. 

\begin{center}

\includegraphics[scale=.5]{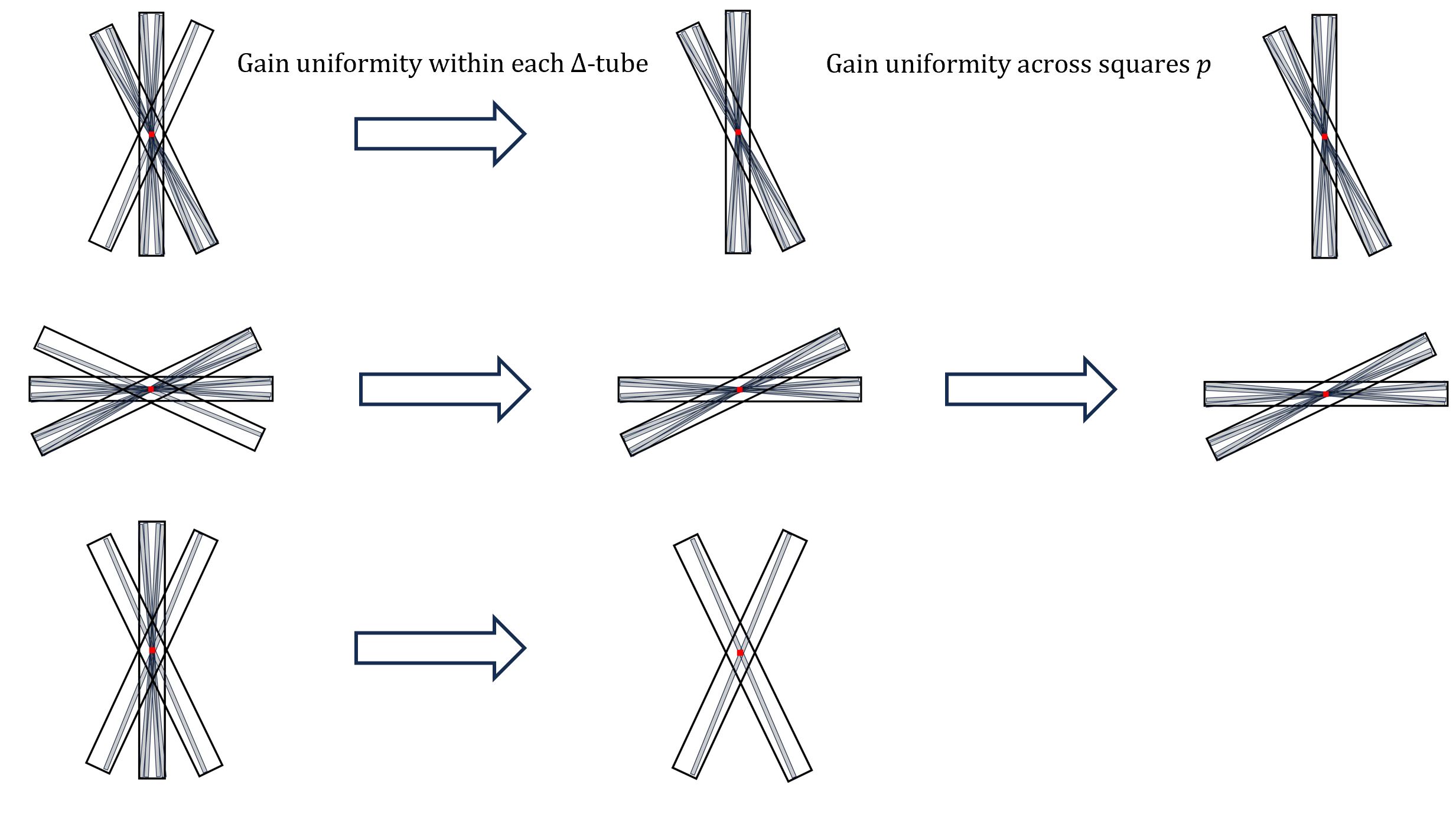}

\end{center}

The upshot is that our collections are now uniform in the sense that each refined family $\mathcal{T}(p)$ is covered by approximately the same number of $\Delta$-tubes (denoted $m_2)$, each remaining $\Delta$-tube contains approximately the same number of $\delta$-tubes (denoted $m_1)$, and the approximate equalities depend only on $\Delta$. In the above figure, $m_1$ is 4, and $m_2$ is 2. 

Unfortunately, we still need to achieve a bit more uniformity. This is accomplished with another dyadic pigeonholing, similar to the start of this proof, where we consider the $\Delta$-tubes $\mathbf{T}$ which cover the smaller $\delta$-tubes of between $2^{j-1}$ and $2^{j}$ many associated squares $p$. The remaining collection of $\Delta$-tubes is our set $\bar{\mathcal{T}}_{\Delta}$. We define $H$ to be $2^j m_1$, where $j$ is the value we extract from this pigeonholing. The requirement on the value of $H$ from Proposition \ref{prop_2scale} is satisfied since 
\begin{equation*}
    H\approx_\Delta M\cdot \vert\mathcal{P}\vert/\vert\bar{\mathcal{T}}_{\Delta}\vert
\end{equation*}
\noindent translates to
\begin{equation*}
    \text{(\# of }p\text{ represented per }\Delta\text{-tube)}\text{(}\delta\text{-tubes per }\Delta\text{-tube)} \approx_\Delta \dfrac{\text{(}\delta\text{-tubes per }p\text{)}\text{(original \# of }p\text{)}}{\text{(\# of }\Delta\text{- tubes)}}.
\end{equation*}

Now, we verify the two claims of the lemma: 
\begin{enumerate}
\item $\bar{\mathcal{T}}_{\Delta}$ is a $(\Delta, s, C_2)$ set: First, fix a scale $r\geq\Delta$. It suffices to show that a fixed $r$-tube $\mathbf{T}_r$ contains $\lessapprox_\Delta C_1 \vert \bar{\mathcal{T}}_{\Delta}\vert r^s$ $\Delta$-tubes $\mathbf{T}$ from our collection $\bar{\mathcal{T}}_{\Delta}$. From now on, we will consider only the $\Delta$-tubes in this $\mathbf{T}_r$, and we will denote by $N$ the number of these $\Delta$-tubes. A quick final pigeonholing allows us to fix a particular $p_0$ such that 
\begin{equation*}
    \vert T_\Delta(p_0) \vert \gtrapprox_\Delta N \vert\bar{T}_\Delta \vert m_2.
\end{equation*}
\noindent Here, the uniformity we gained from pigeonholing in the setup is crucial because no matter what $p_0$ we get above, we know that each $\Delta$-tube associated to $p_0$ contains about $m_1$ many $\delta$-tubes from $\mathcal{T}(p_0)$. Using the fact that $\mathcal{T}(p_0)$ is a $(\delta, s, C_1)$-set that - even after the above refinements - contains about $M$ many $\delta$-tubes, we obtain that \begin{equation*}
\vert \{T\in\mathcal{T}(p_0): T\subset \mathbf{T}_r\}\vert \lesssim C_1 M r^s.
\end{equation*}
\noindent On the other hand, from the previous paragraph, we have 
\begin{equation*}
\vert \{T\in\mathcal{T}(p_0): T\subset \mathbf{T}_r\}\vert \gtrapprox_\Delta N \vert \bar{T}_\Delta \vert^{-1} M,
\end{equation*}
\noindent and the claim follows.

\item For any $\mathbf{T}\in\bar{\mathcal{T}}_{\Delta}$,
    $\vert\{(p, T)\in\bar{\mathcal{P}}\times \mathcal{T}^\delta: T\in\mathcal{T}(P) \text{ and } T\subset\mathbf{T}\}\vert\gtrsim H$:
    This claim follows almost immediately from the definition of $H$. Fix a $\Delta$-tube $\mathbf{T}$, and we know it must contain approximately $m_1$ $\delta$-tubes from $2^j$ different $p$'s (again, where $j$ comes from the last pigeonholing). This is exactly how $H=2^jm_1$ was defined, and as a result of the pigeonholing, we know that the above holds \emph{uniformly} for the remaining $\Delta$-tubes $\mathbf{T}$. 
\end{enumerate}

\end{proof}

Equipped with this lemma, we explain the proof of the induction on scales proposition. 

\begin{proof}[Proof of Proposition \ref{prop_2scale}]
As in the proof of the lemma, we begin with some pigeonholing. Namely, we need to reduce the family $\mathcal{Q}_0$ of $\Delta$-squares intersecting our starting family $\mathcal{P}_0$. First, fix some $Q\in\mathcal{Q}$ and apply Lemma \ref{Pigeonholing before induction} to $\mathcal{P}\cap Q$, obtaining a set $\mathcal{T}_\Delta(Q)$ of $\Delta$-tubes. The point of this application is that we get a uniform lower bound on the cardinality of a set of incidences, and we retain that our resulting collection of tubes is ``$s$-dimensional" at our new scale, which was a requirement of the ``nice" configurations. 

Having already seen some pigeonholing in the previous proof, we elide the details of the several uses of this principle and focus on explaining the gain in uniformity. The interested reader is encouraged to look at the (very thorough) Section 5 of the original paper \cite{OS21} for specifics. First, we guarantee that the families $\mathcal{T}_\Delta(Q)$ have roughly constant cardinality, necessitating the removal of some squares. Then, we force the number of $\delta$-tubes from $\mathcal{T}_0$ in each $\mathbf{T}$ to be roughly constant, which may require the removal of more $Q$. We denote the remaining set of $\Delta$-squares by $\mathcal{Q} $.\footnote{Technically, to ensure we match the exact definition of niceness, we still may need to throw out a few $\Delta$-tubes associated with each of the remaining $Q\in\mathcal{Q}$ so their cardinalities agree exactly, but this will not be a problem since if anything this \emph{increases} the separation of the remaining tubes.}

Claims (1) and (2) follow because the loss in each portion of the pigeonholing depended only on $\delta$, and (3) follows from the fact that we applied the previous lemma to each square $Q$ individually and could only gain separation of the $\Delta$-tubes associated to each as we progressed. So, (4) and the associated inequality are what remains to be shown. This inequality follows from a few elementary observations, and
\begin{equation*}
    \vert\mathcal{T}(Q)\vert\gtrapprox_\delta\dfrac{\vert\mathcal{T}_Q\vert}{M_Q}M.
\end{equation*}
So we turn to construct the families $T_Q$, which satisfy the above and property (4). 

Write $\bar{\delta}:=\delta/\Delta$, which is the scale we are interested in. The main problem is that we have $\delta$-separation but not $\bar{\delta}$-separation that property (4) requires. To fix this, we consider tube packets, which are the tubes from a $\mathcal{T}(p)$ which lie within some $\bar{\delta}$-tube $\mathbf{T}_{\bar{\delta}}$.  We perform a final pigeonholing on each family $\mathcal{P}\cap Q$ to ensure that for all $p\in\mathcal{P}$, the associated family of $\delta$-tubes has cardinality roughly $M_Q$.

The main idea now is essentially to select one tube from each tube packet, as this will upgrade the $\delta$-separation to $\bar{\delta}$-separation. The families $T_Q$ we want are (more or less) the images of these single remaining tubes from each packet under the map $S_Q$. Technically, we may not be able to select exactly one tube from each tube packet prior to defining the above families. If we were, the desired inequality would follow since the tube packets represented by the tubes would be disjoint, and we could add the cardinalities of each tube packet. Fortunately, we can further refine our collection of tubes so that this holds, and we do not sacrifice many tubes. This follows from two very brief lemmas, that tubes which are ``separated" (in a certain sense) represent different tube packets, and that we only have to sacrifice a small number of tubes to obtain a separated set. Applying these lemmas establishes the desired inequality and completes the proof. 

\end{proof}

\section{Multi-scale decomposition} \label{Section 5}

In this section, we will prove the existence of a sequence of scales where we obtain $\delta^{-\epsilon}$ gain on the number of tubes $|\mathcal{T}|$. Then, we will prove Theorem \ref{MainT_tube}.

\begin{definition}
    Let $\delta \in 2^{-\mathbb{N}}$ be a dyadic number such that also $\delta^{1/2} \in 2^{-\mathbb{N}}$. Let $C,K >0$, and let $0 \leq s \leq d$. A non-empty set $\cp \subset D_\delta$ is called $(\delta, s, C,K)$-regular if $\cp$ is a $(\delta,s,C)$-set, and moreover
    \begin{equation*}
        |\cp|_{\delta^{1/2}} \leq K \cdot \delta^{-s/2}
    \end{equation*}
    where $|\cp|_{\delta^{1/2}}$ is the smallest number of $\delta^{1/2}$-cubes in $\cd_{\delta^{1/2}}$ that we need to cover all $\delta$-cubes in $\cp$.
\end{definition}

\begin{remark}
\rm

The definition of regular sets is weaker than the AD-regular set: it only requires regularity at one scale, $\de^{1/2}$.

\end{remark}

\begin{remark}\rm
    We consider $(\delta,s,C)$-sets $\cp$ such that $|\cp|_\delta \approx \delta^{-s}$. More specifically, $C^{-1} \delta^{-s} \leq |\cp|_\delta \leq \delta^{-s}$. Thus, we have $|\cp \cap Q|_{\delta} \leq C\delta^{-s/2} $ where $Q$ is a $\delta^{1/2}$-cube. If $\mathcal{P}$ is $(\delta,s,C,K)$-regular, it means 
    \begin{equation*}
        \frac{1}{|\cp|_{\delta^{1/2}}} \sum_{ Q \in \cd_{\delta^{1/2}}(\cp)} |\cp \cap Q| = \frac{|\cp|_{\delta}}{|\cp|_{\delta^{1/2}}} \geq \frac{1}{K}\delta^{-s/2}. 
    \end{equation*}
    Therefore, $|\cp|_{\delta} \approx \delta^{-s}$ and $|\cp|_{\delta^{1/2}} \approx \delta^{-s/2}$.
\end{remark}
\begin{theorem}\label{thm_card_regT}
    Given $s \in (0,1)$ and $ t \in (s,2]$, there exists $\epsilon= \epsilon(s,t) >0$ such that the following holds for small enough $\delta \in 2^{-\mathbb{N}}$. Let $\cp \subset \cd_\delta$ be a $(\delta, t, \delta^{-\epsilon}, \delta^{-\epsilon})$-regular set. Assume that for every $p \in \cp$, there exists a $(\delta, s, \delta^{-\epsilon})$-set of dyadic tubes $\cT(p) \subset \cT^\delta$ such that $T \cap p \neq \emptyset$ for all $ T \in \cT(p)$. Then,
    \begin{equation*}
        |\cT| \geq \delta^{-2s-\epsilon},
    \end{equation*}
    where $\cT = \cup_{p \in \cp} \cT(p)$.
\end{theorem}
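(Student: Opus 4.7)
The plan is to apply the two-scale decomposition of Proposition \ref{prop_2scale} at the intermediate scale $\Delta = \delta^{1/2}$, invoke a strictly better incidence estimate than Corollary \ref{cor_T_est} at that scale (relying on regularity), and combine the two resulting single-scale bounds via the multiplicative inequality \eqref{prop_2scale_eq}.

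First, after a dyadic pigeonholing on the multiplicity $M := |\mathcal{T}(p)|$, I would apply Proposition \ref{prop_2scale} with $\Delta := \delta^{1/2}$. The regularity hypothesis is tailor-made for this choice: $|\mathcal{P}|_{\delta^{1/2}} \approx \delta^{-t/2}$, so after a further refinement making $|\mathcal{P}\cap Q|_\delta$ roughly constant in $Q \in \mathcal{D}_\Delta(\mathcal{P})$, this constant is forced to be $\approx \delta^{-t/2}$. Consequently the coarse pair $(\mathcal{D}_\Delta(\mathcal{P}), \mathcal{T}^\Delta(\mathcal{T}))$ from conclusion (3) has $(\delta^{1/2}, t)$-set points and $(\delta^{1/2}, s)$-set tube families, and so does the rescaled interior pair $(S_Q(\mathcal{P}\cap Q), \mathcal{T}_Q)$ from conclusion (4), uniformly in $Q$; both are honest discretized Furstenberg-type configurations at scale $\delta^{1/2}$, with constants $\delta^{-O(\epsilon)}$.

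Next I would apply a strengthened incidence estimate at each of these two scales. Naively using Corollary \ref{cor_T_est} at both scales only reproduces the elementary $|\mathcal{T}| \gtrapprox \delta^{-2s}$: the product inequality
\[
\frac{|\mathcal{T}|}{M} \gtrapprox \frac{|\mathcal{T}^\Delta(\mathcal{T})|}{M_\Delta} \cdot \max_Q \frac{|\mathcal{T}_Q|}{M_Q}
\]
gives $|\mathcal{T}|/M \gtrapprox \delta^{-s} (M_\Delta M_Q \delta^s)^{(t-s)/(1-s)}$, which, combined with $M \lesssim M_\Delta M_Q$, is essentially never stronger than the direct application of Corollary \ref{cor_T_est}. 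The true gain must come from a dichotomy under regularity: either the configurations at scale $\delta^{1/2}$ permit an $\eta(s,t)$-improvement over Corollary \ref{cor_T_est}, yielding $|\mathcal{T}^\Delta(\mathcal{T})|/M_\Delta \gtrapprox \delta^{-s/2 - \eta}$ and similarly at the interior scale, or else the tubes concentrate into a thin sub-tube, a case one rules out using the $(\delta, t)$-set hypothesis on $\mathcal{P}$. With an $\eta$-gain available at at least one of the two scales, \eqref{prop_2scale_eq} upgrades to $|\mathcal{T}|/M \gtrapprox \delta^{-s - \eta}$, and after saturating $M \approx \delta^{-s}$ (achievable by pigeonholing; the regime $M \ll \delta^{-s}$ is lighter and handled directly from the off-diagonal computation behind Corollary \ref{cor_T_est}) this yields $|\mathcal{T}| \geq \delta^{-2s - \epsilon}$ for any $\epsilon$ noticeably smaller than $\eta$.

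The main obstacle is therefore the dichotomy itself: establishing, under regularity at scale $\delta^{1/2}$, a strictly better Szemer\'edi--Trotter-type bound than Corollary \ref{cor_T_est}. This is a genuinely new ingredient, not derivable from the Cauchy--Schwarz proof behind Corollary \ref{cor_T_est}, which is sharp on well-spaced tube configurations. Secondary technical points --- tracking the $\delta^{-O(\epsilon)}$ losses through the pigeonholes inside Proposition \ref{prop_2scale}, and verifying that $M_\Delta M_Q$ roughly equals $M$ after all refinements --- are routine by comparison.
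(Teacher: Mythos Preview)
Your architecture matches the paper's: apply Proposition \ref{prop_2scale} at $\Delta=\delta^{1/2}$, use Corollary \ref{cor_T_est} at the fine scale inside each $Q$ (no gain there), and extract an $\epsilon$-gain at the coarse scale so that \eqref{prop_2scale_eq} yields $|\cT|/M\gtrapprox\delta^{-s-\epsilon}$. You are also right that the whole proof hinges on a dichotomy that is a genuinely new ingredient.

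However, your description of the dichotomy is off, and this matters. The alternative is \emph{not} ``tubes concentrate into a thin sub-tube, ruled out by the $(\delta,t)$-set hypothesis on $\cp$''. The Frostman condition on $\cp$ alone cannot produce any improvement over Corollary \ref{cor_T_est}; that bound is sharp for product-type examples even when $\cp$ is perfectly regular. The actual dichotomy, stated as \eqref{T_or_result} and proved in Section \ref{est_w_reg}, is a statement about the \emph{original} scale-$\delta$ configuration under the regularity hypothesis: either $|\cT|\geq\delta^{-2s-\epsilon}$ already (so we are done), or $|\cT_{\delta^{1/2}}|\geq\delta^{-s-\epsilon}$. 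Its proof is by contradiction: if both fail, one manufactures inside a typical $\delta^{1/2}$-tube a product-like point set whose projections in a $(\delta^{1/2},t-s)$-set of directions are all $\lessapprox\delta^{-s/2}$, contradicting Bourgain's discretized projection theorem (Theorem \ref{bourgain-projection-2}). This is a substantially deeper input than anything the $(\delta,t)$-condition on $\cp$ can supply.

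Once one assumes the second alternative $|\cT_{\delta^{1/2}}|\geq\delta^{-s-\epsilon}$, the coarse-scale gain follows by a case split on $\mathbf{M}=M_\Delta$: if $\mathbf{M}\geq\delta^{-s/2-C\epsilon/2}$ then Corollary \ref{cor_T_est} at scale $\delta^{1/2}$ already gives $|\cT_{\delta^{1/2}}|/\mathbf{M}\gtrapprox\delta^{-s/2-C_1\epsilon}$ via the factor $(\mathbf{M}\delta^{s/2})^{(t-s)/(1-s)}$, while if $\mathbf{M}$ is smaller one uses $|\cT_{\delta^{1/2}}|\geq\delta^{-s-\epsilon}$ directly. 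No improvement is sought or needed at the interior scale.
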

    We will use the result that under the assumptions above, we have either 
    \begin{equation}\label{T_or_result}
        |\cT| \gtrsim \delta^{-2s-\epsilon} \qquad \text{or} \qquad |\cT_{\delta^{1/2}}| \geq \delta^{-s-\epsilon}
    \end{equation}
    where $\cT_{\delta^{1/2}} := \cT^{\delta^{1/2}}(\cT)$ is the smallest set of $\delta^{1/2}$-tubes that we need to cover all tubes in $\cT$.
    
    The proof of \eqref{T_or_result} will be provided in Section \ref{est_w_reg}. If the first happens, we are done. So, we will assume the second in the proof of Theorem \ref{thm_card_regT}.
\begin{proof}
    Without loss of generality, we assume that $\cp \approx \delta^{-t}$ and $|\cT(p)| =M \approx \delta^{-s}$ for all $p \in \cp_0$ and it suffices to consider \textit{heavy} squares $Q \in \cd_{\delta^{1/2}}(\cp)$, which means $|\{ p \in \cp : p \subset Q\}| \geq \delta^{-t/2 +5\epsilon}$. By Proposition \ref{prop_2scale} (3), we can additionally assume that 
    \begin{equation*}
        (\cQ, \cT_{\delta^{1/2}}):=(\mathcal{D}_{\delta^{1/2}}(\cp), \cT^{\delta^{1/2}}(\cT))
    \end{equation*}
    is a $(\delta^{1/2}, s, \mathbf{C}, \mathbf{M})$-nice configuration where $\mathbf{C}:=C_{\delta^{1/2}} \approx 1$ and $\mathbf{M}:=M_{\delta^{1/2}} \gtrapprox \delta^{-s/2}$. Thus, using Corollary \ref{cor_T_est}, we obtain 
    \begin{equation}\label{T_deltahalf_1}
        |\cT_{\delta^{1/2}}| \gtrsim \mathbf{M}\delta^{-s/2} \cdot (\mathbf{M} \delta^{s/2})^{\frac{t-s}{1-s}}.
    \end{equation}
    Since, we assumed the second case of \eqref{T_or_result}, we have $|\cT_{\delta^{1/2}}| \geq \delta^{-s-C\epsilon}$ for some $C>0$ and sufficiently small $\epsilon >0$. If $\mathbf{M} \geq \delta^{-s/2-C\epsilon/2}$, we use \eqref{T_deltahalf_1}. Otherwise, we use $|\cT_{\delta^{1/2}}| \geq \delta^{-s-C\epsilon}$. In either case, we get
    \begin{equation}\label{T_deltahalf_2}
        |\cT_{\delta^{1/2}}| \geq \mathbf{M} \delta^{-s/2 -C_1 \epsilon}
    \end{equation}
    for some $C_1 \gtrsim_{s,t} C$. Since $Q$ are heavy squares, it follows from Proposition \ref{prop_2scale} (1) and Corollary \ref{cor_T_est} that 
    \begin{equation}\label{T_Q}
        |\cT_Q| \gtrapprox M_Q \delta^{-s/2}
    \end{equation}
    where $M_Q$ is the one in Proposition \ref{prop_2scale} (4). Combining \eqref{prop_2scale_eq}, \eqref{T_deltahalf_2}, \eqref{T_Q}, we get
    \begin{equation*}
        |\cT| \gtrapprox \frac{|\cT_{\delta^{1/2}}|}{\mathbf{M}} \frac{|\cT_Q|}{M_Q} M \gtrapprox M \delta^{-s-C_1\epsilon} \gtrsim \delta^{-2s-C_1\epsilon}.
    \end{equation*}
\end{proof}
\subsection{Multi-scale decomposition}

\begin{definition}
\label{part3_key-def}
Let $0<\de<\De\leq 1$ be dyadic numbers and let $\cp\subset[0,1)^2$. For given $0\leq s\leq 2$ and $C>0$:
\begin{enumerate}
    \item We say that $\cp$ is an $(s, C)$-set between the scales $\de$ and $\De$ if $S_Q(\cp\cap Q)\subset [0,1)^2$ is a $(\de/\De,s,C)$-set for all $Q\in\cd_\De(\cp)$.
    \item We say that $\cp$ is $(s,C,K)$-regular between the scales $\de$ and $\De$ if $S_Q(\cp \cap Q)$ is a $(\de/\De,s,C)$-set with an additional property
    \begin{equation*}
        |S_Q(\cp\cap Q)|_{(\de/\De)^{1/2}}\leq K(\de/\De)^{-s/2}
    \end{equation*}
    for all $Q\in\cd_\De(\cp)$. 
\end{enumerate}

\end{definition}

Given scales $\de=\De_n <\De_{n-1}<\cdots<\De_1\leq \De_0=1$, by a bottom-to-top pigeonholing (with an implicit loss depending on $n$), we can assume the point set $\cp$ is $\De_j$-uniform in the sense that
\begin{equation*}
    |\cp \cap Q|_{\De_j}=N_j
\end{equation*}
for all $j=1,\ldots, n$ and all $Q\in\cd_{\De_{j-1}}(\cp)$. 

Let $\Delta \in 2^{-\mathbb{N}}$ and $\delta = \Delta^{m}$ for some $m \in \mathbb{N}$. The following proposition gives a multi-scale decomposition of a $(\Delta_i)_{i=1}^m$-uniform $(\delta, t, \delta^{-\epsilon})$-set $\cp$.
\begin{proposition}\label{prop_scales}
Given $s\in(0,1)$, $t\in(s,2]$, $\De\in2^{-\ZN}$ and $\e>0$ there is $0<\tau=\tau(\e,s,t)\leq\e$ such that the following holds for large enough $m$.

Let $\de=\De^m$ and let $\cp\subset[0,1]^2$ be a $(\De^i)_{i=1}^m$-uniform $(\de,t,\de^{-\e})$-set. Then there are numbers $t_j\in[s,2]$, $1\leq j\leq n$, and scales
\begin{equation*}
    \de=\De_n<\De_{n-1}<\cdots<\De_1<\De_0=1,
\end{equation*}
with $\De_j$ an integer power of $\De$, and a partition $\{1,\ldots, n\}=\cs\bigcup\cb$ (structured and bad indices) such that the following properties hold:
\begin{enumerate}
    \item $\De_{j-1}/\De_j\geq\de^{-\tau}$ for all $j\in\cs$ and $\prod_{j\in\cb}(\De_{j-1}/\De_j)\leq \de^{-\e}$.
    \item For each $j\in\cs$, the set $\cp$ is a $(t_j,(\De_{j-1}/\De_j)^\e)$-set between the scales $\De_j$ and $\De_{j-1}$. Moreover, if $t_j>s$ then $\cp$ is $(t_j,(\De_{j-1}/\De_j)^\e, (\De_{j-1}/\De_j)^\e)$-regular between the scales $\De_j$ and $\De_{j-1}$. 
    \item $\prod_{j\in\cs}(\De_{j-1}/\De_j)^{t_j}\geq\de^{\e-t}$.
    \item If $j\in\cb$, then $j+1\not\in\cb$ for all $j\in\{1,\ldots, n-1\}$.
\end{enumerate}
\end{proposition}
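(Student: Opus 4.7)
My plan is to translate the statement into a combinatorial claim about the branching sequence of $\cp$ and then construct the scales greedily.

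By the $(\Delta^i)_{i=1}^m$-uniformity there exist integers $N_1,\dots,N_m$ with $|\cp\cap Q|_{\Delta^i}=N_i$ for every $Q\in\cd_{\Delta^{i-1}}(\cp)$. Set $\sigma_i:=\log N_i/\log(1/\Delta)\in[0,2]$ and $\Sigma(k):=\sigma_1+\cdots+\sigma_k$. Applying the $(\delta,t,\delta^{-\e})$-set property at $\Delta^i$-cubes yields $\Sigma(i)\geq ti-\e m$ for every $i\in[0,m]$, and in particular $\Sigma(m)\geq(t-\e)m$. Unfolding Definition~\ref{part3_key-def} using the uniformity, the condition ``\,$\cp$ is a $(t_j,(\Delta_{j-1}/\Delta_j)^\e)$-set between $\Delta^{i_j}$ and $\Delta^{i_{j-1}}$'' is equivalent to the local lower bound
\[
\Sigma(i_{j-1}+k)-\Sigma(i_{j-1})\geq t_jk-\e(i_j-i_{j-1}),\qquad k\in[0,i_j-i_{j-1}],
\]
while the regularity (when $t_j>s$) becomes the matching upper bound at $k=(i_j-i_{j-1})/2$. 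Properties (1)--(4) and the dimensional bound in (3) therefore become explicit constraints on the breakpoints $0=i_0<i_1<\cdots<i_n=m$ and slopes $t_j\in[s,2]$, reducing the proposition to a purely combinatorial statement about the walk $\Sigma$.

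For the construction I would iterate from left to right. Discretize the admissible slopes as $\cA:=[s,2]\cap\e\ZZ$, of size $O(1/\e)$. Given $i_{j-1}$, for each $t\in\cA$ let $L(t)$ be the largest $L\leq m-i_{j-1}$ for which the local lower bound holds on $[0,L]$ with slope $t$ and, when $t>s$, the midpoint upper bound holds at $L/2$. Choose $t^*\in\cA$ maximising the product $t\cdot L(t)$ and set $L^*:=L(t^*)$. If $L^*\geq\tau m$, declare $j\in\cs$ with $i_j=i_{j-1}+L^*$, $t_j=t^*$. Otherwise declare $j\in\cb$ and take $i_j=i_{j-1}+L_\cb$ where $L_\cb$ is the smallest increment after which a structured extension of length $\geq\tau m$ becomes available from $i_j$; the global inequality $\Sigma(i)\geq ti-\e m$ guarantees existence of $L_\cb$ and enforces (4). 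Properties (1) and (2) are built into the definitions. Property (3) should follow because the maximising choice of $t^*$ makes it close to the interval's average slope $(\Sigma(i_j)-\Sigma(i_{j-1}))/L^*$ up to $\e$, so $\sum_{j\in\cs}t_j(i_j-i_{j-1})$ is within $O(\e m)$ of $\sum_{j\in\cs}(\Sigma(i_j)-\Sigma(i_{j-1}))$; combining with $\Sigma(m)\geq(t-\e)m$ and the control on total bad length yields the desired dimensional bound.

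The main obstacle is establishing simultaneously the bound $\sum_{j\in\cb}(i_j-i_{j-1})\leq\e m$ on the total bad length and the midpoint regularity on every structured interval with $t_j>s$. The midpoint condition is the delicate ingredient---for a given slope $t>s$ it can fail on long intervals even when the lower bound survives, forcing $L(t)$ to shrink and possibly landing the greedy step in a bad phase. I expect the crucial step is a slope-variation bookkeeping argument: since $\phi(k):=(\Sigma(i_{j-1}+k)-\Sigma(i_{j-1}))/k$ takes values in $[0,2]$ and the number of times its value can deviate by $\geq\e$ along the whole sequence is $O(1/\e)$, the number of structured phases is $O(1/\tau)$, and a refinement of this counting (tracking where jumps in $\phi$ occur and which of the $O(1/\e)$ slopes in $\cA$ is maximising) will bound the length of each individual bad phase by $O(\e\tau m)$. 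Balancing $\tau$ against $\e$, the discretisation $|\cA|=O(1/\e)$, and these counts determines the quantitative dependence $\tau=\tau(\e,s,t)$ asserted in the proposition.
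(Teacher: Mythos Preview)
Your reduction to the branching function $\Sigma$ is exactly the paper's first move (they write $f(j)=\Sigma(j)$), and your translation of the $(t_j,\cdot)$-set and regularity conditions into one-sided and midpoint inequalities on $\Sigma$ is correct.

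The divergence---and the gap---is in the combinatorial construction. The paper does \emph{not} search over a discretised slope set and maximise $t\cdot L(t)$. Instead it invokes a structural lemma (Lemma~8.5 of \cite{OS21}) giving a clean dichotomy: each structured interval $[c_j,d_j]$ is either
\begin{itemize}
\item \emph{$\e$-linear}, meaning $|f(x)-L_{f,c_j,d_j}(x)|\le\e(d_j-c_j)$ for \emph{all} $x\in[c_j,d_j]$, with secant slope $s_f(c_j,d_j)\ge s$; here one sets $t_j=s_f(c_j,d_j)$ and the regularity (midpoint upper bound) is automatic because $\e$-linearity is a two-sided bound at every point; or
\item \emph{$\e$-superlinear} with slope \emph{exactly} $s$; here one sets $t_j=s$ and no regularity is needed.
\end{itemize}
Thus only two values of $t_j$ ever arise: the interval's own average slope, or $s$. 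Property~(3) then follows directly because on $\e$-linear pieces $t_j$ \emph{equals} the average slope by definition, and on the remaining pieces the superlinearity with slope $s$ still gives $\Sigma(d_j)-\Sigma(c_j)\ge s(d_j-c_j)-\e(d_j-c_j)$, which combined with the global bound $\Sigma(m)\ge(t-\e)m$ and the small bad-length yields the product estimate.

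Your greedy scheme does not produce this dichotomy, and the step you flag as the obstacle is a genuine one: the claim that the maximiser $t^\ast$ of $t\cdot L(t)$ lies within $\e$ of the interval's average slope is not true in general. The one-sided lower bound can hold on a very long interval for $t=s$ (pushing $t^\ast$ down), while for the true average slope the midpoint upper bound can fail whenever $f$ is far from linear there---so the greedy step may systematically undershoot the average slope and lose Property~(3), or else be forced into long bad phases. Your ``slope-variation bookkeeping'' sketch does not address this; the number of $\e$-oscillations of a $2$-Lipschitz function on $[0,m]$ is not $O(1/\e)$ independent of $m$. The fix is precisely the paper's: ask for two-sided $\e$-linearity (not just the midpoint bound) on the $t_j>s$ intervals, and fall back to slope exactly $s$ otherwise. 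This decouples the two constraints and makes the bad-length bound a statement about how much of $[0,m]$ a Lipschitz function above the line $tk-\e m$ can fail to be covered by such pieces, which is the content of Lemma~8.5.
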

\begin{proof}
Let us explain the idea of the proof of Proposition \ref{prop_scales}. We define a function $f=f_P:[0,m] \rightarrow [0,2m]$ by letting $f(0)=0$ and
\begin{equation*}
    f(j) = \sum_{i=1}^j \frac{\log(N_i)}{\log(1/\De)}, \qquad 1 \leq j \leq m
\end{equation*}
and interpolating linearly. By using $f$, we consider exponents such that $N_1 \cdots N_j = \Delta^{-f(j)}$. 

Now, we introduce the following definition. Given a function $f:[a,b] \rightarrow \mathbb{R}$, we let
    \begin{equation*}
        L_{f,a,b}(x):=f(a) +s_f(a,b)(x-a), \qquad x \in \mathbb{R}
    \end{equation*}
    and $s_{f}(a,b)$ be the slope of $L_{f,a,b}$. We say that $(f,a,b)$ is \textit{$\epsilon$-linear} if 
    \begin{equation*}
        |f(x)-L_{f,a,b}(x) | \leq \epsilon |b-a| , \qquad x \in [a,b]
    \end{equation*}
    and we say that $(f,a,b)$ is \textit{$\epsilon$-superlinear} if
    \begin{equation*}
        f(x) \geq L_{f,a,b}(x) -\epsilon|b-a|, \qquad x \in [a,b].
    \end{equation*}
    
Note that $s\in (0,1)$ and $t \in (s,2]$ and $f$ is a $2$-Lipschitz function. Then, there exists $\tau >0$ and a family of non-overlapping intervals $\{[c_j,d_j]\}_{j=1}^n$ contained in $[0,m]$ such that:
\begin{enumerate}
    \item For each $j$, at least one of the following alternatives holds:\\
    (a) $(f,c_j,d_j)$ is $\epsilon$-linear with $s_f(c_j,d_j) \geq s$.\\
    (b) $(f,c_j,d_j)$ is $\epsilon$-superlinear with $s_f(c_j,d_j)=s$.
    \item $d_j-c_j \geq \tau m$ for all $j$.
    \item $|[0,m] \backslash \cup_j [c_j,d_j] |\lesssim_{s,t} \epsilon m$.
\end{enumerate}
For the detail of the proof, see Lemma 8.5 in \cite{OS21}. If $(f,c_j,d_j)$ is $\epsilon$-superlinear, $P$ is a $( s_f(c_j,d_j), (\Delta_{j-1} / \Delta_j)^\epsilon)$-set between scales $\Delta_j := \Delta^{d_j}$ and $\Delta_{j-1}:=\Delta^{c_j}$. Similarly, if $(f,c_j,d_j)$ is $\epsilon$-linear, $P$ is a $( s_f(c_j,d_j), (\Delta_{j-1} / \Delta_j)^\epsilon,(\Delta_{j-1} / \Delta_j)^\epsilon)$-regular between scales $\Delta_j := \Delta^{d_j}$ and $\Delta_{j-1}:=\Delta^{c_j}$. The property (3) means $|[0,m] \backslash \cup_j [c_j,d_j] |$ is negligible, which implies $\prod_{j \in \cB} (\Delta_{j-1} / \Delta_j) \leq \delta^{-\epsilon}$ in (1) of Proposition \ref{prop_scales}.
    
\end{proof}

Let us state the proposition where we obtain the lower bound of $|\mathcal{T}|$ according to the properties of scales.

\begin{proposition}\label{propT_multiscale}
    Given $s \in (0,1) , t>s, \tau \in (0,1), n \geq 1$, if $\epsilon_G , \eta,\lambda >0 $ are taken small enough and $0 < \epsilon_N \leq \epsilon_G$, then the following holds for all $C_p \geq 1$ and $0 < \delta \leq 1$. 

    Let $(\mathcal{P},\mathcal{T}) \subset D_\delta \times \cT^\delta$ be a $(\delta, s, \delta^{-\lambda}, M)$-nice configuration for some $M \geq 1$. Let 
    \begin{equation*}
        \delta = \Delta_n < \Delta_{n-1} < \cdots < \Delta_1 < \Delta_0=1
    \end{equation*}
    be a sequence of dyadic scales, and assume that $\mathcal{P}$ is $(\Delta_j)_{j=1}^n$-uniform. we assume that the scale indices $\{1,\cdots, n\}$ are partitioned into {normal scales, good scales}, and {bad scales}, denoted $\mathcal{N}, \mathcal{G}$ and $\mathcal{B}$, respectively. We assume that 
    \begin{equation*}
        \Delta_j / \Delta_{j-1} \leq \delta^\tau, \qquad j \in \mathcal{N} \cup \mathcal{G}.
    \end{equation*}
    Moreover, the family $\mathcal{P}$ has the following structure at the normal and good scales:
    \begin{itemize}
        \item If $j \in \cn$, then $\mathcal{P}$ is an $(s, [\log(1/\delta)]^{C_p}\cdot (\Delta_{j-1}/\Delta_j)^{\epsilon_N})$-set between the scales $\Delta_j$ and $\Delta_{j-1}$.
        \item If $j \in \cg$, there exists a number $t_j \geq t$ such that $\mathcal{P}$ is
        \begin{equation*}
            (t_j, [\log(1/\delta)]^{C_p}\cdot(\Delta_{j-1}/\Delta_j)^{\epsilon_G}, [\log(1/\delta)]^{C_p}\cdot (\Delta_{j-1}/\Delta_j)^{\epsilon_G})\textit{-regular}
        \end{equation*}
        between scales $\Delta_j$ and $\Delta_{j-1}$.
        \item Otherwise, $j \in \cb$.
    \end{itemize}
    Then, there exists constants $C,C'>0$ such that
    \begin{equation}\label{T_multiscale}
        |\cT| \geq \left[\log\left(\frac{1}{\delta}\right)\right]^{-C} M \delta^{C'\lambda} \delta^{-s+\epsilon_N} \prod_{j \in \cg} \left(\frac{\Delta_{j-1}}{\Delta_j} \right)^\eta \cdot \prod_{j \in \cb} \frac{\Delta_j}{\Delta_{j-1}}.
    \end{equation}
\end{proposition}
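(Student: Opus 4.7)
The plan is to prove Proposition \ref{propT_multiscale} by induction on the number of scales $n$, using Proposition \ref{prop_2scale} to peel off the outermost scale at each step. For the base case $n=1$, I apply Corollary \ref{cor_T_est} if $1\in\cn$, Theorem \ref{thm_card_regT} if $1\in\cg$, or the trivial bound $|\cT|\geq M$ if $1\in\cb$. In the good case the regularity of $\cp$ between $\Delta_1$ and $\Delta_0=1$, combined with $t_1\geq t>s$, is exactly the hypothesis of Theorem \ref{thm_card_regT}, whose $\delta^{-\epsilon}$-improvement translates into the factor $(\Delta_0/\Delta_1)^\eta$ provided $\eta$ is chosen no larger than the $\epsilon(s,t)$ produced by that theorem; the $\tau$-separation $\Delta_1/\Delta_0\leq\delta^\tau$ ensures this gain is non-vacuous.

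For the inductive step I apply Proposition \ref{prop_2scale} with scale pair $(\delta,\Delta_1)$. After passing to the refinement it provides, with $\approx_\delta$-losses absorbed into $[\log(1/\delta)]^{-C}$ and $\delta^{C'\lambda}$, I obtain a coarse $(\Delta_1,s,C_{\Delta_1},M_{\Delta_1})$-nice configuration at scale $\Delta_1$ and, for each coarse cube $Q\in\cd_{\Delta_1}(\cp)$, a rescaled $(\delta/\Delta_1,s,C_Q,M_Q)$-nice configuration $(S_Q(\cp\cap Q),\cT_Q)$. The key inequality \eqref{prop_2scale_eq} now reads
\begin{equation*}
    \frac{|\cT|}{M}\gtrapprox\frac{|\cT^{\Delta_1}(\cT)|}{M_{\Delta_1}}\cdot\max_Q\frac{|\cT_Q|}{M_Q},
\end{equation*}
so it suffices to lower-bound each factor separately and multiply.

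The coarse factor is bounded by the base-case analysis at effective scale $\Delta_1$: whether $1\in\cn,\cg,$ or $\cb$, this produces $M_{\Delta_1}$ times $(\Delta_0/\Delta_1)^{s-\epsilon_N}$, or the additional gain $(\Delta_0/\Delta_1)^\eta$, or nothing, respectively. The fine factor is bounded by the inductive hypothesis applied to $(S_Q(\cp\cap Q),\cT_Q)$, viewed as an $(n-1)$-scale configuration with scales $\delta/\Delta_1=\Delta_n/\Delta_1<\cdots<\Delta_2/\Delta_1<1$; $(\Delta_j)$-uniformity of the parent descends to each rescaled slice, and the structural type of each scale $j\geq 2$ is preserved by $S_Q$. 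Multiplying the two bounds, the main factors telescope as $\Delta_1^{-(s-\epsilon_N)}\cdot(\delta/\Delta_1)^{-(s-\epsilon_N)}=\delta^{-s+\epsilon_N}$, the good and bad products concatenate over $j=1,\ldots,n$, and at every bad scale the actual loss $(\Delta_j/\Delta_{j-1})^{s-\epsilon_N}$ majorizes the weaker claimed loss $(\Delta_j/\Delta_{j-1})$ since $s-\epsilon_N<1$.

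The main obstacle is uniform bookkeeping of constants through the induction. Each application of Proposition \ref{prop_2scale} produces a $[\log(1/\delta)]^{O(1)}$ pigeonholing loss and adjusts the niceness constants $C_Q,C_{\Delta_1}$ to within $[\log(1/\delta)]^{O(1)}$ of the previous ones; over $n$ iterations these must remain absorbable into the $[\log(1/\delta)]^{-C}$ and $\delta^{C'\lambda}$ factors in \eqref{T_multiscale}. More delicately, the parameters $\eta,\epsilon_G,\epsilon_N,\lambda$ must be chosen consistently: $\eta$ in terms of the $\epsilon(s,t)$ coming from Theorem \ref{thm_card_regT}, $\epsilon_N$ small enough that the per-scale normal losses telescope harmlessly across all $n$ iterations, $\epsilon_G$ large enough that the good-scale regularity hypothesis matches Theorem \ref{thm_card_regT}'s requirement at the rescaled scale $\Delta_j/\Delta_{j-1}$, and $\lambda$ small enough that the cumulative niceness adjustment does not overwhelm the telescoped gain. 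Verifying that these constraints admit a simultaneous solution, and that $n$ can be treated as an absolute constant depending only on the fixed data $\tau,s,t$, is the technical core of the argument.
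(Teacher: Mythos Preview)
Your inductive framework via Proposition \ref{prop_2scale} is exactly the paper's approach, and your treatment of normal and bad scales is correct. However, there is a genuine gap in the good case $j\in\cg$.

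You invoke only Theorem \ref{thm_card_regT}, which yields $|\cT|\geq (\Delta_{j-1}/\Delta_j)^{-2s-\epsilon}$ at the relevant rescaled scale. The target bound \eqref{T_multiscale}, however, carries a factor of $M$ (or, in the inductive step, of $M_{\Delta_1}$). Proposition \ref{prop_2scale} gives no upper control on $M_{\Delta_1}$ or $M_Q$; these can in principle be much larger than $(\Delta_{j-1}/\Delta_j)^{-s}$. When that happens, the quantity $M_{\Delta_1}\cdot (\Delta_0/\Delta_1)^{-s-\eta}$ exceeds $(\Delta_0/\Delta_1)^{-2s-\epsilon}$, and Theorem \ref{thm_card_regT} alone is not enough.

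The paper closes this gap by using, at good scales, not only the regularity of $\cp$ but also its $(t_j,\ldots)$-set property with $t_j\geq t>s$. Applying Corollary \ref{cor_T_est} at a good scale gives
\[
|\cT|\gtrapprox (\Delta_{j-1}/\Delta_j)^{\epsilon_G}\cdot M\,(\Delta_j/\Delta_{j-1})^{-s}\cdot\bigl(M(\Delta_j/\Delta_{j-1})^{s}\bigr)^{\frac{t-s}{1-s}},
\]
so the \emph{excess} in $M$ above $(\Delta_{j-1}/\Delta_j)^{-s}$ produces its own gain. One then sets $\gamma=(\eta+2\epsilon_G)(1-s)/(t-s)$ and splits: if $M\geq (\Delta_j/\Delta_{j-1})^{-s-\gamma}$ the Corollary bound already gives the $\eta$-gain; if $M\leq (\Delta_j/\Delta_{j-1})^{-s-\gamma}$ then $M(\Delta_j/\Delta_{j-1})^{-s-\eta}$ is small enough that Theorem \ref{thm_card_regT} covers it. Without this dichotomy, the $M$-dependence of \eqref{T_multiscale} is not established, and the induction cannot close.
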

    There is no gain in normal scales $\mathcal{N}$, but there is no loss as well. The gain comes from good scales $\mathcal{G}$, and the loss comes from bad scales $\mathcal{B}$. In the proof of Theorem \ref{MainT_tube}, we combine Proposition \ref{prop_scales} and \ref{propT_multiscale} to construct a sequence of $\Delta_n$ such that there is $\delta^{-\epsilon}$ gain from good scales $\mathcal{G}$ while the loss from bad scales $\mathcal{B}$ is negligible compared to the gain.
\begin{proof}
    Since the proof uses induction, we consider only when $n=1$. Then, there are only two scales, $\delta$ and $1$. If $1 \in \cb$, \eqref{T_multiscale} follows from the assumption that $|\cT| \geq M$. If $1 \in \cn$, we use Corollary \ref{cor_T_est} and obtain that $|\cT| \gtrapprox_\delta M\cdot \delta^{\lambda} \cdot \delta^{-s+\epsilon_N} $. Lastly, if $1 \in \cg$, since $t_j \geq s$, we use Corollary \ref{cor_T_est} again and obtain that
    \begin{equation*}
        |\cT| \gtrapprox_\delta \delta^{\epsilon_G} \cdot M\delta^{-s} \cdot (M\delta^s)^{\frac{t-s}{1-s}}.
    \end{equation*}
    Let 
    \begin{equation*}
        \gamma = \frac{(\eta+2\epsilon_G) (1-s)}{(t-s)}.
    \end{equation*}
    If $M \geq \delta^{-s-\gamma}$, \eqref{T_multiscale} immediately follows and if $M\leq \delta^{-s-\gamma}$, it follows from Theorem \ref{thm_card_regT}, where the regularity of $\cp$ is used.
\end{proof}

Now, we can prove the main theorem of the paper.

\begin{proof}[Proof of Theorem \ref{MainT_tube}]
    Without loss of generality, we assume that $\delta= \Delta^m$ where $\Delta \in 2^{-\mathbb{N}}$, $\cT(p)$ are $(\delta, s, \delta^{-\lambda})$-sets such that $|\cT(p)|=M \geq \delta^{-s+\lambda}$. We can also assume that $\cp$ is a $(\Delta^i)_{i=1}^m$-uniform $(\delta, t, \delta^{-\epsilon})$-set. We choose $\epsilon_G $ sufficiently small and choose $\epsilon_N, \epsilon, \lambda$ such that $\epsilon_N=\epsilon \leq \epsilon_G \eta/100$ and $\lambda \leq \epsilon_G \eta/(100C')$ where $\eta$ and $C'$ constants from Proposition \ref{propT_multiscale}. Combining Proposition \ref{prop_scales} and \ref{propT_multiscale}, we obtain the desired estimate for $|\cT|$.
    To be more specific, Proposition \ref{prop_scales} implies that the loss from bad scales is negligible and the gain from good scales is at least $\delta^{-\epsilon_G \eta/8}$.
\end{proof}

\section{Improved estimates under regular conditions}\label{est_w_reg}
In this section, we prove \eqref{T_or_result}. Recall the definition of a $(\de,s,C)$-set in Definition \ref{def-1}. It gives that $|\cT(p)|_\de\gtrapprox\de^{-s}$ and $|\cT(p)|_{\de^{1/2}}\gtrapprox\de^{-s/2}$ for any $p\in \cp$.  Suppose, on the contrary, we have 
\begin{equation}
\label{negation-1}
    |\cT|_\de\lessapprox\de^{-2s} \text{ and }|\cT|_{\de^{1/2}}\lessapprox\de^{-s}.
\end{equation}
By Proposition \ref{discretized-ST}, we know in addition 
\begin{equation}
\label{negation-2}
     \de^{-2s}\lessapprox|\cT|_\de,\,\,\,\de^{-s}\lessapprox|\cT|_{\de^{1/2}},\,\,\,|\cT(p)|_\de\lessapprox\de^{-s},\text{ and }|\cT(p)|_{\de^{1/2}}\lessapprox\de^{-s/2}.
\end{equation}
The above estimates gives that any $\de^{1/2}$-tube intersecting $p$ contains $\approx\de^{-s/2}$ $\de$-tubes in $\cT(p)$. From there, we will construct a configuration (see Figure \ref{figure2}) that violates Bourgain's projection theorem. This shows that \eqref{negation-1} cannot be true.

\smallskip

We begin by considering a typical $\de^{1/2}$-cube $Q$ that $Q\cap \cp\neq \varnothing$. $Q\cap \cp$ can be understood as a $(\de^{1/2}, t, C)$-set (after rescaling with factor $\de^{-1/2}$), so $|Q\cap \cp|_\de\gtrapprox \de^{-t/2}$. Note that each $\de^{1/2}$-tube intersecting $Q$ corresponds to a direction (with resolution $\de^{1/2}$), and the set of these directions, denoted by $\Si=\Si(Q)$, is a $(\de^{1/2}, s, C)$-set. Since $t>s$, we claim that for most directions $\si\in\Si$, $\pi_\si(\cp\cap Q)$ contains a $(\de^{1/2}, s, C)$-set (after a $\de^{-1/2}$-dilation). This is done by using Kaufman's exception set estimate in the original \cite{OS21}. We give a combinatorial sketch below.  

\subsection{\texorpdfstring{$\pi_\si(\cp\cap Q)$}{pi sigma(P cap Q)} contains a \texorpdfstring{$(\de^{1/2}, s, C)$}{(delta {1/2}, s, C)}-set for most directions \texorpdfstring{$\si\in\Si(Q)$}{sigma in Sigma(Q)}}

We first show that for $\gtrapprox\de^{-s}$ directions $\si\in\Si$, $|\pi_\si(\cp\cap Q)|_\de\gtrapprox\de^{-s/2}$. Let $\cp'=\de^{-1/2}(\cp\cap Q)$ be a dilation of $P\cap Q$ with factor $\de^{-1/2}$, and let $\De=\de^{1/2}$. Assume without loss of generality, $s=t$ and $|\cp'|_{\De}\approx\De^{-s}$ (or we can pick a subset to fulfill our requirements). For any distance $\De\leq\rho\leq1$, the fact that $\cp'$ is a $(\De, s, C)$-set gives that there are at most $\De^{-s}(\De/\rho)^{-s}$ pairs of $\De$-balls in $B_1,B_2\subset N_\De(\cp')$ so that $\dist(B_1,B_2)\sim\rho$. Since $\Si$ is a $(\De, s, C)$-set, each these pair $(B_1,B_2)$ determines at most $\rho^{-s}$ directions in $\Si$, and hence each pair $(B_1,B_2)$ intersects at most $\rho^{-s}$ $\De$-tubes whose direction is contained in $\Si$. Sum up all dyadic $\rho$ so
\begin{equation*}
    \sum_{B_1,B_2,T}\Id_T(B_1)\Id_T(B_2)\lessapprox\De^{-2s}.
\end{equation*}
A double-counting argument gives  for $\gtrapprox\de^{-s}$ directions $\si\in\Si$, $|\pi_\si(\cp\cap Q)|_\de\gtrapprox\De^{-s}=\de^{-s/2}$. 

A similar argument should give that for $\gtrapprox\de^{-s}$ directions $\si\in\Si$, $\pi_\si(\cp\cap Q)$ contains a $(\de^{1/2}, s, C)$-set (after a $\de^{-1/2}$-dilation). For any typical $Q$, remove those bad directions in $\Si=\Si(Q)$ so that for any $\si\in\Si(Q)$, $\pi_\si(\cp\cap Q)$ contains a $(\de^{1/2}, s, C)$-set (after a $\de^{-1/2}$-dilation). Note that after removing all bad directions (Equiv. $\de^{1/2}$-tube), $\cT(p)$ is still a $(\de, s, C)$-set, \eqref{negation-1} and \eqref{negation-2} continue to hold.

\subsection{Inside a typical \texorpdfstring{$\de^{1/2}$}{delta {1/2}}-tube}

By pigeonholing, there is a fraction $\gtrapprox1$ of $\de^{1/2}$-tube, each of which contains $\approx \de^{-t+s}$ typical $\de^{1/2}$-cubes. We claim that in addition there is another fraction $\gtrapprox1$ of $\de^{1/2}$-tube so that the $\approx \de^{-t+s}$ typical $\de^{1/2}$-cubes contained in each of these $\de^{1/2}$-tube are distributed like a $(\de^{1/2}, t-s, C')$-set with some $C'\approx 1$.

In fact, observe that for each of the $O(\log\De)$ scales $\rho\in\{1, |\log\De|, \ldots, |\log\De|^n=\De\}$, the number of $\De$-tubes $\bar T$ (which we called bad $\De$-tubes) containing $\gtrapprox\rho^{s-t}|\log\De|^{10}$ typical $\De$-cubes in $\bar T\cap B_\rho$ for some $\rho$-ball $B_\rho$ (i.e. these $\De$-cubes are trapped in a $\rho\times\De$-subtube of the original $\De$-tube $\bar T$) is $\lessapprox|\log\De|^{-1}$. This follows from a similar double counting argument as above (by counting triples $(Q_1,Q_2,T)$ so that $Q_1,Q_2\subset T$ and $\dist(Q_1,Q_2)\sim\rho'$ for some $\De\leq\rho'\leq\rho$. Here we use the fact that $\cp$ is regular, which gives the number of typical $\De$-balls is $\lessapprox\De^{-t}$). For each scale $\rho$, remove all those bad $\De$-tubes. The remaining $\De$-tubes thus satisfy what we claimed above, and the remaining $\De$-tubes is a fraction $\gtrapprox1$ of the original $\De$-tubes. 
\subsection{Obtaining a special configuration from one \texorpdfstring{$\de^{1/2}$}{delta {1/2}}-tube}

Now pick one of the remaining $\De$-tube $\bar T$ by pigeonholing, and let $\pi$ be the projection along the direction of $T$ (assume this is the vertical direction). For each typical $\De$-cube $Q\subset T$, viewed $\pi(\cp\cap Q)$ as a one-dimensional set contained in the lower horizontal side of $Q$. Hence for each $Q\subset \bar T$, $\pi(\cp\cap Q)$ contains a $(\De,s,C)$-set (after a horizontal rescaling with the factor $\De^{-1}$), and vertically, all these one-dimensional $(\De,s,C)$-sets are distributed like a $(\De,t-s,C')$-set (since vertically all the typical $\De$-ball with $Q\subset \bar T$ are distributed like a $(\De,t-s,C')$-set). Denote by $P'=\bigcup_Q\pi(\cp\cap Q)$, so for each $p\in P'$, there are $\approx\de^{-s/2}$ $\de$-tubes (denoted by $\cT'(p)$) intersecting $p$, and all these $\de$-tubes are contained in $\bar T$, and are distributed like a $(\De,s,C')$-set after recaling with a factor $\De^{-1}$. Moreover, $|\bigcup_{p\in P'}\cT'(p)|\lessapprox\De^{-2s}$ (which is a consequence of the pigeonholing at the beginning of this paragraph. Note that we always have $|\bigcup_{p\in P'}\cT'(p)|\gtrapprox\De^{-2s}$).\\

Rescale $P'$ horizontally by the factor $\De^{-1}$ and denote the resulting point set by $P''$. We obtained the following structure:

\begin{enumerate}
    \item $P''$ is obeys a product structure: $P''\subset X\times Y$ where $X$ is a horizontal line, $Y$ is a vertical $(\De,t-s,C')$-set, and for $y\in Y$, $X\times \{y\}\cap P''$ is a horizontal $(\De,s,C)$-set.
    \item For each $p\in P''$, there is a set $\bar\cT(p)$ of $\De$-tubes intersecting $p$, with $|\bar\cT(p)|\approx\De^{-s}$.
    \item $|\bar\cT:=\bigcup_{p\in P''}\bar\cT(p)|\lessapprox\De^{-2s}$.
\end{enumerate}
\begin{figure}[ht]
\centering
\includegraphics[width=0.3\textwidth]{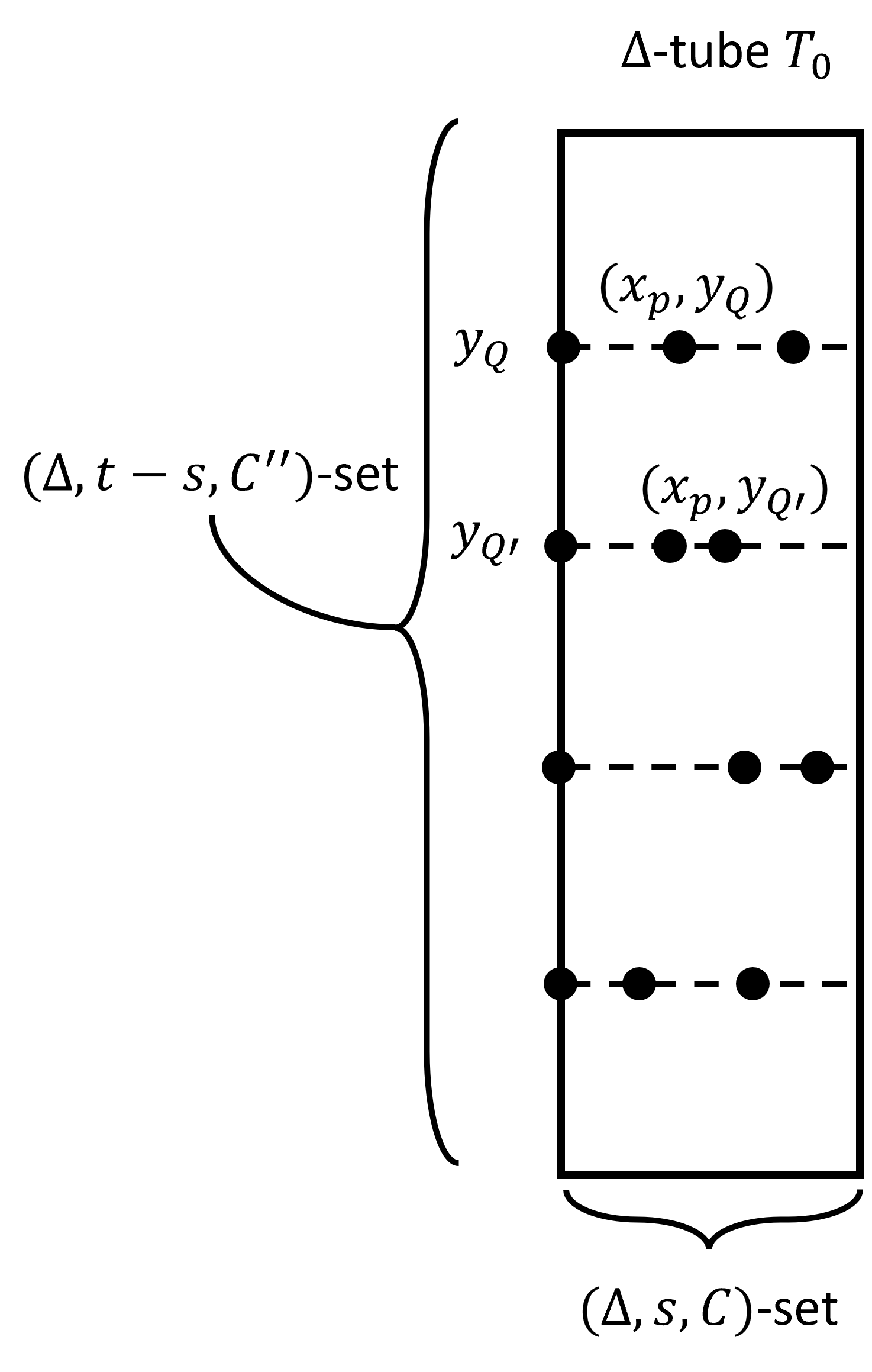}
\caption{Product structure}
\label{figure2}
\end{figure}

\subsection{A contradiction to Bourgain's projection theorem}

We want to get a contradiction between the configuration $P''$ and Bourgain's projection theorem (remark (iii) of Section 7 of \cite{Bourgain-projection}).

\begin{theorem}[\cite{Bourgain-projection}]
\label{bourgain-projection-2}
Given $0<\al<2,\, \be>0$ and $\ka>0$, there exists $\tau_0>0$ and $\eta>\al/2$ such that the following holds.

Let $\mu$ be a probability measure on $\ZS^1$ such that (a non-concentration property)
\begin{equation*}
    \max_\theta \mu([\theta-\rho,\theta+\rho])<C\rho^\ka.
\end{equation*}
Let $\de\ll 1$ be sufficiently small and let $\sa\subset [1,2]\times [1,2]$ be a union of $\de$-squares satisfying
\begin{equation*}
    |\sa|=\de^{2-\al}
\end{equation*}
and 
\begin{equation}
\label{Section7-nonconcentration-1}
    \max_x|\sa\cap B(x,\rho)|<\rho^\be|\sa|\,\,\,\text{ for }\de<\rho<\de^{\tau_0}.
\end{equation}
Then there exists a subset $D\subset\supp(\mu_1)$ with $\mu_1(D)>1-\de^{\ka\tau_0/2}$ so that for any $\theta\in\supp(\mu_1)$ and any $\sa'\subset\sa$ with $|\sa'|\geq \de^\e|\sa|$, 
\begin{equation}
\label{big-projection}
    |\pi_\theta(\sa')|>\de^{1-\eta}.
\end{equation}

\end{theorem}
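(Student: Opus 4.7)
The plan is to prove Theorem \ref{bourgain-projection-2} by contradiction, combining a multiscale pigeonholing argument with Bourgain's discretized sum-product theorem, which is the engine driving the original proof in \cite{Bourgain-projection}.

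I would first assume the conclusion fails, producing a ``bad'' set $B\subset\supp(\mu)$ of $\mu$-measure at least $\de^{\ka\tau_0/2}$; for each $\theta\in B$, there is a subset $\sa_\theta\subset\sa$ with $|\sa_\theta|\geq\de^\e|\sa|$ and $|\pi_\theta(\sa_\theta)|\leq\de^{1-\eta}$ for some $\eta$ aiming to stay close to $\al/2$. Via standard dyadic pigeonholing refinements applied simultaneously to the fibers of $\pi_\theta$ over $\theta\in B$, I expect to extract a single subset $\sa_0\subset\sa$ of size $\gtrapprox\de^\e|\sa|$ together with a refined $B_0\subset B$ of nearly the same $\mu$-measure such that $|\pi_\theta(\sa_0)|\leq\de^{1-\eta}$ \emph{simultaneously} for all $\theta\in B_0$.

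Next I would extract approximate algebraic structure. For any $\theta_1,\theta_2\in B_0$ with angular separation $\gtrsim\de^{\tau_0}$ (typical thanks to the $\ka$-non-concentration of $\mu$), the pair $(\pi_{\theta_1},\pi_{\theta_2})$ is a bi-Lipschitz bijection at scale $\de$, so $\sa_0$ fits inside the product $\pi_{\theta_1}(\sa_0)\times\pi_{\theta_2}(\sa_0)$ of $\de$-covering size $\lesssim\de^{-2\eta}$. Combined with $|\sa_0|\gtrsim\de^{2-\al+\e}$, this only recovers the trivial bound $\eta\geq\al/2-O(\e)$. The whole task is thus to force the strict improvement $\eta>\al/2+\eta_0$ from the fact that $B_0$ contains \emph{many} such directions together with the $\be$-non-concentration of $\sa$.

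The crucial step is to upgrade ``many small projections'' to ``approximate closure under many multiplications''. Fixing $\theta_1$ and letting $\theta_2$ vary over $B_0$, the one-dimensional set $X:=\pi_{\theta_1}(\sa_0)$ becomes, in appropriately chosen coordinates, approximately invariant under shearing by the positive-dimensional set of slopes $\{\tan(\theta_2-\theta_1):\theta_2\in B_0\}$. Bourgain's discretized sum-product theorem then forbids a $\de$-set of covering size $\de^{-\eta}$ satisfying $\be$-non-concentration from being approximately invariant under a $\ka$-dimensional family of multiplicative scales, unless its dimension exceeds $\eta$ by a definite amount $\eta_0=\eta_0(\al,\be,\ka)>0$. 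This contradicts $|X|\leq\de^{1-\eta}$ once $\eta$ is close enough to $\al/2$, giving the theorem with $\eta=\al/2+\eta_0/2$. The hardest part will be this final step: converting the geometric input into the algebraic hypothesis with the precise quantitative parameters. Bourgain's route passes through a discretized Balog-Szemer\'edi-Gowers argument followed by Pl\"unnecke-Ruzsa-type covering estimates; each step costs a controlled exponent loss, and balancing all these so that the final gain $\eta_0$ over $\al/2$ remains strictly positive, with the explicit dependence on $\al,\be,\ka,\tau_0$, is where essentially all of the technical effort sits.
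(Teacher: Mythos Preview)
The paper does not prove this theorem. Theorem \ref{bourgain-projection-2} is quoted verbatim from Bourgain's paper \cite{Bourgain-projection} (specifically, remark (iii) of Section 7 there) and is used as a black box in Section \ref{est_w_reg}: the authors build the product-like configuration $P''$, apply point-line duality, and then simply invoke Theorem \ref{bourgain-projection-2} with $\De$ in place of $\de$ and $\al=2+2s$ to derive a contradiction to \eqref{negation-1}. There is no proof of Bourgain's projection theorem anywhere in \cite{OS21}; the study guide is about how to \emph{apply} it, not how to establish it.

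Your proposal is therefore answering a different question than the one the paper addresses. That said, as a sketch of Bourgain's own argument it is broadly in the right spirit---the discretized sum-product theorem is indeed the engine, and the passage from ``many small projections'' to approximate multiplicative structure via a Balog--Szemer\'edi--Gowers step is the standard route. One technical point worth flagging: your reduction to a \emph{single} common subset $\sa_0$ with small projection in \emph{all} directions of $B_0$ simultaneously is stronger than what straightforward pigeonholing yields; Bourgain's actual argument works instead with the weaker input that each $\theta$ carries its own large subset $\sa_\theta$, and the additive-combinatorial machinery has to absorb this non-uniformity. But since the paper under review treats the whole theorem as a citation, none of this is relevant to comparing with the paper's proof.
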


Use a projective transform to send the horizontal line $\{x=0\}$ to infinity. Apply a point-line duality for the pair $(P'', \bar T)$ to obtain 
\begin{enumerate}
    \item A point set $\wt P$ with $|\wt P|\lessapprox\De^{-2s}$.
    \item A $(\De,t-s,C)$-set $\Theta$ of direction so that for each $\theta\in\Theta$, there is a subset $\wt P'\subset\wt P$ with $|\wt P'|\gtrapprox|\wt P|$ and $|\pi_\theta(P')|_\De\lessapprox\De^{-s}$.
\end{enumerate}

\begin{figure}[ht]
\centering
\includegraphics[width=0.5\textwidth]{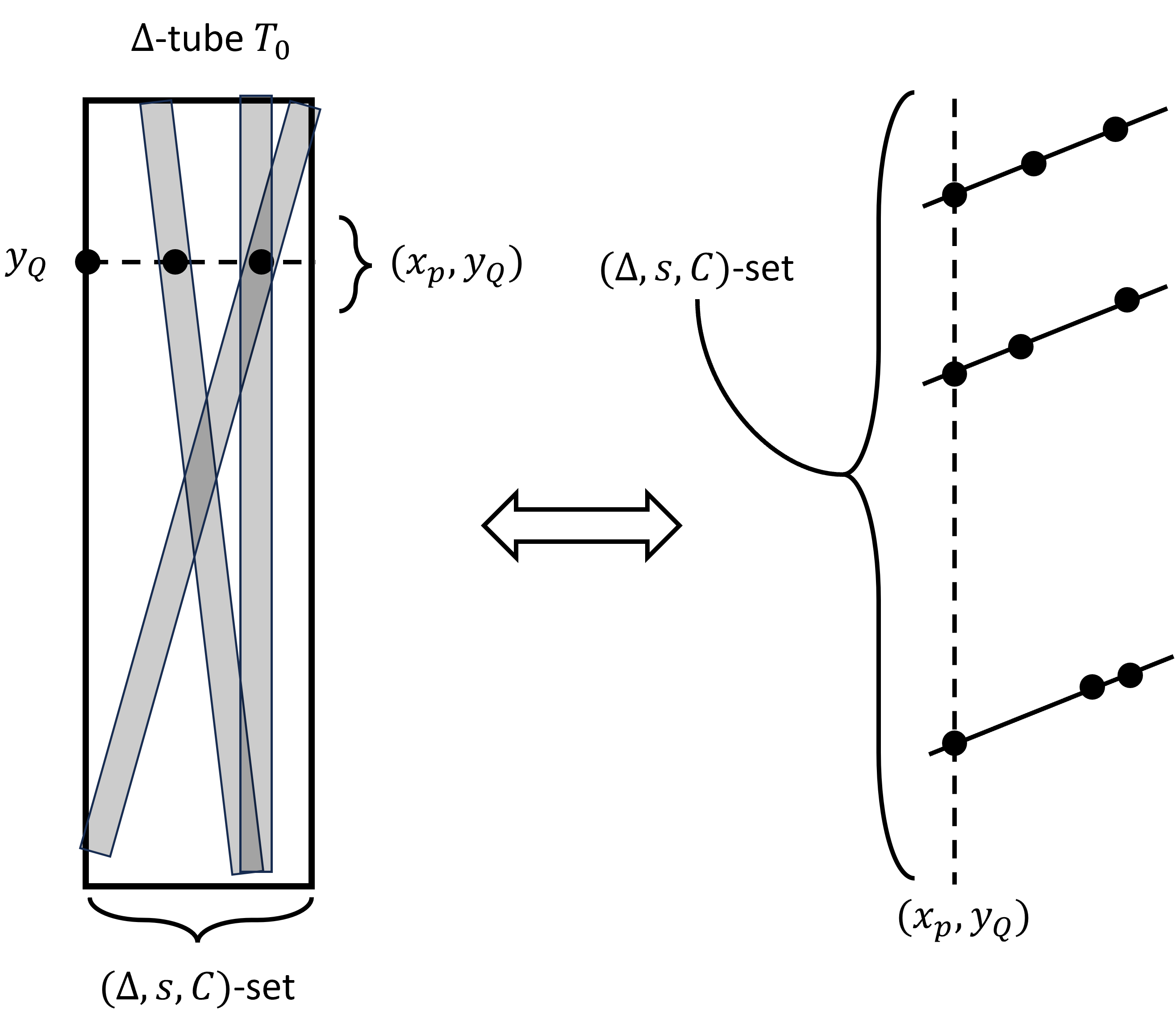}
\caption{Point-line duality: If $y_Q$ is fixed, $(x_p, y_Q)$ corresponds to parallel lines. Tubes $\cT(p)$ intersect $(x_p,y_Q)$ correspond to the points on the lines on the left.}
\end{figure}
We used here a slightly different point-line duality, which is defined by the map
\begin{equation*}
    \mathbf{D}:(a,b) \mapsto \{x=ay+b: y \in \mathbb{R}\},
\end{equation*}
while earlier in the paper the map $\mathbf{D} :(a,b) \mapsto \{ y =ax+b: x\in \mathbb{R}\} $ is used.

This contradicts Theorem \ref{bourgain-projection-2} with $\delta$ replaced by $ \De$ and $\alpha= 2+2s$. Thus, contrary to \eqref{negation-1}, we have either $|\cT|_\de\geq\de^{-2s-\e}$ or $|\cT|_{\de^{1/2}}\geq\de^{-s-\e}$.

\section{Acknowledgement}
The authors would like to express our gratitude to the organizers, Philip Gressman, Yumeng Ou, Hong Wang, and Josh Zahl, for organizing such an excellent ``Study Guide Writing Workshop 2023" at the University of Pennsylvania. In particular, we would like to thank Hong Wang as the mentor of our group. The conference was supported by NSF Grant DMS-2142221; the first author was additionally supported by NSF Grants DMS-2037851 and DMS-2246906; and the second author is supported by MOE Taiwan-Caltech Fellowship.

\bibliographystyle{alpha}
\bibliography{bibli}

\end{document}